\documentclass[12pt,reqno]{amsart}

\usepackage{amsfonts,amsmath,amssymb,amsthm}
\usepackage{latexsym}
\usepackage{mathrsfs}
\usepackage{enumitem}
\usepackage{nicefrac}
\usepackage{accents}

\usepackage{amsaddr}

\makeatletter
\@namedef{subjclassname@2010}{%
  \textup{2010} Mathematics Subject Classification}
\makeatother

\newtheorem{Theorem}{Theorem}[section]
\newtheorem{Lemma}{Lemma}[section]
\newtheorem{Proposition}{Proposition}[section]
\newtheorem{Corollary}{Corollary}[section]
\newtheorem{Remark}{Remark}[section]
\newtheorem{Definition}{Definition}[section]

\numberwithin{equation}{section}

\frenchspacing

\textwidth=13.5cm
\textheight=23cm
\parindent=16pt
\oddsidemargin=1.5cm
\evensidemargin=1.5cm
\topmargin=-0.5cm

\newcommand{\funcion}[5]{%
{\setlength{\arraycolsep}{2pt}
	\begin{array}{r@{}ccl}
	#1\colon & #2 & \longmapsto & #3\\
	& #4 & \longmapsto & #5
\end{array}}}

\begin{document}

\baselineskip=17pt

\title[A primitive associated to the Cantor-Bendixson derivative]
{A primitive associated to the Cantor-Bendixson derivative on the real line}

\author[B. \'Alvarez-Samaniego]{Borys \'Alvarez-Samaniego}
\address{\vspace{-3mm} N\'ucleo de Investigadores Cient\'{\i}ficos\\
	Facultad de Ingenier\'{\i}a, Ciencias F\'{\i}sicas y Matem\'atica\\
	Universidad Central del Ecuador (UCE)\\
	Quito, Ecuador}
\email{balvarez@uce.edu.ec, balvarez@impa.br}

\author[A. Merino]{Andr\'es Merino}
\address{\vspace{-3mm} Facultad de Ciencias\\
	Escuela Polit\'ecnica Nacional (EPN)\\
	Quito, Ecuador}
\email{andres.merino@epn.edu.ec}

\date{May 02, 2016.}

\begin{abstract}
We consider the class of compact countable subsets of the real numbers $\mathbb{R}$. 
By using an appropriate partition, up to homeomorphism, of this class we give a 
detailed proof of a result shown by S. Mazurkiewicz and W. Sierpinski related to   
the cardinality of this partition. Furthermore, for any compact subset of 
$\mathbb{R}$, we show the existence of a ``primitive'' 
related to its Cantor-Bendixson derivative.
\end{abstract}

\subjclass[2010]{54A25; 03E15}
\keywords{Cantor-Bendixson's derivative; ordinal numbers}

\maketitle


\section{Introduction}
The earliest ideas of limit point and derived set in the space of the real numbers 
were both introduced and investigated by Georg Cantor since 1872 
(see also \cite{Cantor1872, Cantor1879, Cantor1880, Cantor1882, Cantor1883}) 
to analyze the convergence set of a trigonometric series.  These two concepts 
have been generalized to the case of any arbitrary topological space.  Thus,  let $X$ 
be a topological space and let $A$ be a subset of $X$, we write $A'$ to denote 
the derived set of $A$, that is, the set of all limit points of $A$.  The next 
definition extends the process of taking the derivative of a set for any ordinal
number. 

\begin{Definition}[Cantor-Bendixson's derivative]\label{def:D_CB}
Let $A$ be a subset of a topological space. For a given ordinal number $\alpha$, we 
define, using Transfinite Recursion, the  \emph{$\alpha$-th derivative} of $A$, 
written $A^{(\alpha)}$, as follows:
\begin{itemize}
  \item $A^{(0)}=A$,
  \item $A^{(\beta+1)}=(A^{(\beta)})'$, for all ordinal $\beta$,
  \item $\displaystyle A^{(\lambda)}=\bigcap_{\gamma<\lambda} A^{(\gamma)}$, 
	for all limit ordinal $\lambda\neq 0$.
\end{itemize}
\end{Definition}

In this paper, we are initially concerned with the Cantor-Bendixson 
derivative of compact countable subsets of the real numbers, where 
a countable set is either a finite set or a countably infinite set.  
Thus, we consider the set
\begin{equation}\label{setK}
    \mathcal{K}=\{K\subset \mathbb{R}:K\text{ is compact and countable}\}.
\end{equation}
Moreover, for all $K_1, K_2\in\mathcal{K}$, we define the relation
\begin{equation}\label{relation1}
    K_1 \sim K_2 \Longleftrightarrow 
		\text{ there exists } f\colon K_1 \longmapsto K_2\text{ continuous and bijective.}
\end{equation}
It is not hard to see that $\sim$ is an equivalence relation on the set $\mathcal{K}$ 
and since the elements of $\mathcal{K}$ are compact sets, we have that for all 
$K_1, K_2\in\mathcal{K}$
\begin{equation}\label{relation2}
    K_1 \sim K_2 \Longleftrightarrow 
		\text{ there exists } f\colon K_1 \longmapsto K_2\text{ homeomorphism.}
\end{equation}
Therefore, there is a partition of the set $\mathcal{K}$, and we denote by 
\begin{equation}\label{partition}
    \mathscr{K}= {\Large\nicefrac{\mathcal{K}}{\sim}}
\end{equation}
the set of all equivalence classes of $\mathcal{K}$.

In 1920, S. Mazurkiewicz and W. Sierpinski \cite{Sierpinski} showed that the 
cardinality of $\mathscr{K}$ is $\aleph_1$.  In Section 2, we show in detail  
that for any countable ordinal number $\alpha$, and 
for any $p \in \omega$, there is a set $K \in \mathcal{K}$ such that 
$K^{(\alpha)}$ has exactly $p$ elements.  This last fact was first 
briefly mentioned by Cantor in \cite{Cantor1880}.  The results shown 
in Section 2 allow us to prove, in Theorem \ref{Thcardinality}, that the 
cardinality of $\mathscr{K}$ is greater than or equal to $\aleph_1$.  
On the other hand, the cardinality of $\mathscr{K}$ is smaller than or equal to 
$\aleph_1$ as a consequence of Theorem \ref{Th3}.

Section 3 considers Cantor-Bendixson's characteristic, denoted by $\mathcal{CB}$.  
First, we show that for any element $K \in \mathcal{K}$ with 
$\mathcal{CB}(K)=(\alpha,p)$, we get $p=0$ if and only if $K=\varnothing$. Moreover,
we use Lemma \ref{Lem5} to prove Theorem \ref{Th3}, where the injectivity 
of function $\mathcal{\widetilde{CB}}$, defined in \eqref{eq:CBtilde}, is shown.   
These two last results were first mentioned in \cite{Sierpinski}; however, for 
the sake of completeness, we include here their detailed proofs.  Finally,  
Theorem \ref{Thprimitive} shows that for any compact subset of the reals, 
there exists a primitive-like set connected with its 
Cantor-Bendixson derivative.

We recall that if $F$ is a closed subset of $\mathbb{R}$, then 
$(F^{(\alpha)})_{\alpha\in\mathbf{OR}}$ is a decreasing family of closed 
subsets of the real line. Furthermore,  if $K\in\mathcal{K}$, then 
$(K^{(\alpha)})_{\alpha\in\mathbf{OR}}$ is a decreasing family of 
elements of $\mathcal{K}$. 

We denote by $\mathbf{OR}$, the class of all ordinal numbers.  Moreover, 
$\omega$ is used to designate the set of all natural numbers and $\Omega$ 
represents the set of all countable ordinal numbers.  In addition, the 
cardinality of a set $B$ is denoted by $|B|$. 

\section{A family of elements in $\mathcal{K}$ having a Cantor-Bendixson's 
derivative with any given finite number of elements}\label{Sectfam} 

First, we remark that any finite subset of $\mathbb{R}$ is an element 
of $\mathcal{K}$ with empty derived  set.  Thus, a set of 
this kind satisfies the property that its Cantor-Bendixson's derivative is 
empty for all ordinal number greater than or equal to 1.  The following theorem 
let us find some elements belonging to $\mathcal{K}$ not satisfying this  
last property.  The main idea of the next result was given in \cite{Cantor1880}, 
for completeness, we present below its proof in detail.  

\begin{Theorem}\label{Th1}
For any countable ordinal number $\alpha \in \Omega$, and for all 
$a,b\in\mathbb{R}$ such that $a<b$, there is a set $K \in \mathcal{K}$ 
such that $K \subset (a,b]$ and $K^{(\alpha)}=\{b\}$.
\end{Theorem}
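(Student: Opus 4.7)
The plan is to proceed by transfinite induction on $\alpha\in\Omega$, proving the slightly stronger statement that for every $a<b$ there exists $K\in\mathcal{K}$ with $K\subset(a,b]$, $\min K>a$, and $K^{(\alpha)}=\{b\}$. The extra requirement $\min K>a$ creates a gap between $K$ and the left endpoint of the interval, which is essential for gluing pieces coming from consecutive subintervals. The base case $\alpha=0$ is handled by $K=\{b\}$. For the successor step $\alpha\mapsto\alpha+1$, I would fix a strictly increasing sequence $x_0=a<x_1<x_2<\cdots$ with $x_n\to b$, apply the induction hypothesis on each $(x_n,x_{n+1}]$ to obtain $K_n\subset(x_n,x_{n+1}]$ with $\min K_n>x_n$ and $K_n^{(\alpha)}=\{x_{n+1}\}$, and set $K=\{b\}\cup\bigcup_{n\geq 0}K_n$. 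For the limit step $\alpha=\lambda$, I would choose a strictly increasing sequence $(\lambda_n)_n$ of ordinals with $\sup_n\lambda_n=\lambda$ (possible since $\lambda\in\Omega$) and carry out the same construction, now requiring $K_n^{(\lambda_n)}=\{x_{n+1}\}$ in the $n$-th subinterval. In both cases $K$ is countable as a countable union of countable sets, and compact because any sequence in $K$ either remains in a finite union $\bigcup_{n\leq N}K_n$ or escapes every such union and must converge to $b$; moreover $\min K=\min K_0>a$, so the strengthened hypothesis is preserved.

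The key tool for the derivative computation is the localization lemma, proved by a straightforward transfinite induction on $\beta$: for every $F\subset\mathbb{R}$, every open $V\subset\mathbb{R}$, and every ordinal $\beta$, one has $F^{(\beta)}\cap V=(F\cap V)^{(\beta)}\cap V$. Because $\max K_n=x_{n+1}$ while $\min K_{n+1}>x_{n+1}$, the set $K$ has a genuine gap between $K_n$ and $K_{n+1}$, so each compact $K_n$ lies at positive distance from $K\setminus K_n$. I can therefore pick an open $V_n\supset K_n$ with $V_n\cap K=K_n$, and the localization lemma then yields $K^{(\beta)}\cap V_n=K_n^{(\beta)}$ for every ordinal $\beta$. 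In the successor case this gives $\{x_{n+1}:n\geq 0\}\subset K^{(\alpha)}$; an auxiliary transfinite induction on $\beta\leq\alpha$ shows $b\in K^{(\beta)}$, since at successor stages $b$ is a limit of the $x_{n+1}\in K^{(\beta)}$ and at limit stages $b$ lies in the intersection. Since $K\subset\{b\}\cup\bigcup_n V_n$, the reverse inclusion is immediate, so $K^{(\alpha)}=\{b,x_1,x_2,\ldots\}$, a set whose only limit point is $b$, and therefore $K^{(\alpha+1)}=\{b\}$.

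In the limit case, for each $\beta<\lambda$ one has $\lambda_n\geq\beta$ for all sufficiently large $n$, so $\{x_{n+1}\}=K_n^{(\lambda_n)}\subset K_n^{(\beta)}\subset K^{(\beta)}$, and the same induction on $\beta$ shows $b\in K^{(\beta)}$; taking the intersection over $\beta<\lambda$ places $b$ in $K^{(\lambda)}$. Conversely, for each $n$, $K_n^{(\lambda_n+1)}=(\{x_{n+1}\})'=\varnothing$, and since $\lambda_n+1<\lambda$ the monotonicity of the Cantor--Bendixson derivative gives $K^{(\lambda)}\cap K_n\subset K^{(\lambda_n+1)}\cap V_n=K_n^{(\lambda_n+1)}=\varnothing$, so $K^{(\lambda)}=\{b\}$. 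The main obstacle I foresee is controlling the derivative of the assembled set $K$ across the transfinite stages; the localization lemma together with the gap condition $\min K_n>x_n$ is precisely what turns this potentially delicate global problem about derivatives of countable unions into a manageable piecewise computation.
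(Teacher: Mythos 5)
Your proposal is correct, and while it uses the same decomposition as the paper (blocks $K_n$ on consecutive subintervals $(x_n,x_{n+1}]$ accumulating at $b$, assembled by transfinite induction on $\alpha$ with separate successor and limit steps), the central computation is carried out by a genuinely different device. The paper proves the identity $K^{(\beta)}=\biguplus_m K_m^{(\beta)}\uplus\{b\}$ for all $\beta\le\alpha$ by a direct transfinite induction on $\beta$, with a contradiction argument about isolated points at successor stages, repeated in near-identical form in cases (b) and (c). You instead isolate a localization lemma, $F^{(\beta)}\cap V=(F\cap V)^{(\beta)}\cap V$ for $V$ open, and observe that each block $K_n$ is clopen in $K$ (realized as $K\cap V_n$ for an open interval $V_n$), so that $K^{(\beta)}\cap V_n=K_n^{(\beta)}$ falls out for every $\beta$ at once; the union formula and both derivative computations then reduce to bookkeeping about which $\beta$ keep $b$ alive. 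This is essentially the same principle as the paper's own Lemma \ref{Lemtech} (stated there with the hypothesis $K\cap F=K\cap\accentset{\circ}{F}$), which the authors only introduce later for Theorem \ref{Th3} and do not exploit in Theorem \ref{Th1}; your route shows it could have streamlined Section 2 as well. Two small remarks: your ``strengthened'' hypothesis $\min K>a$ is in fact automatic, since a nonempty compact subset of $(a,b]$ attains a minimum that necessarily exceeds $a$, so it costs nothing but is not a genuine strengthening; and your compactness argument is only sketched (a sequence in $K$ need not eventually stay in a finite union of blocks nor escape every one --- you should pass to a subsequence that either lies in a single $K_n$ or meets infinitely many blocks and hence converges to $b$), but this is the same routine verification the paper performs with open covers.
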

\begin{proof}
We will use Transfinite Induction. 
\begin{enumerate}[leftmargin=*]
\item[(a)]
First, we consider the case $\alpha=0$.  For any $a,b\in\mathbb{R}$ such that 
$a<b$, the result follows by taking the set $K=\{b\} \in \mathcal{K}$.  
\item[(b)]
Now, we suppose that for a given countable ordinal number $\alpha \in \Omega$, 
and for all  $c,d\in\mathbb{R}$ such that $c<d$, there is a set 
$\widetilde{K}\in \mathcal{K}$ such that $\widetilde{K}\subset (c,d]$ 
and $\widetilde{K}^{(\alpha)}=\{d\}$.  Let $a,b\in\mathbb{R}$ be such 
that $a<b$. We take a strictly increasing sequence, $(x_n)_{n\in\omega}$, in 
$(a,b]$ such that $x_n \rightarrow b$ as $n \rightarrow +\infty$. Defining 
$x_{-1}:=a$ and applying the hypothesis to the real numbers $x_{m-1} < x_m$,  
$m\in\omega$, it follows that there exists a sequence of sets 
$(K_m)_{m\in\omega}$ such that for all $m\in\omega$, $K_m\in \mathcal{K}$, 
$K_m\subset (x_{m-1},x_m]$ and $K_m^{(\alpha)}=\{x_m\}$.  Now, we define the set
  \begin{equation}\label{eq:02}
    K:=\biguplus_{m\in\omega} K_m \uplus\{b\}.
  \end{equation}
The set $K$, given in \eqref{eq:02}, satisfies the following properties:
\begin{itemize}
\item $K\subset (a,b]$, since $K_m\subset (x_{m-1},x_m] \subset (a,b]$, for all 
$m\in\omega$.
\item $K$ is countable, since it is the countable union of countable sets.
\item $K$ is compact.  In fact, given $(A_i)_{i\in I}$ an open cover of $K$, 
there is a $j\in I$ such that $b\in A_j$. Since $A_j$ is an open set and 
$(x_n)_{n\in\omega}$ is a strictly increasing sequence that converges to $b$, 
there exists $N_1\in\omega$ such that $K_n\subset A_j$ for all $n \in \omega$ with 
$n>N_1$. On the other hand, the set $C := \biguplus_{n=0}^{N_1} K_n$ is compact, 
since it is the finite union of compact sets.  Thus, $C$ has a finite open 
subcover $(A_i)_{i\in J}$. Then, $(A_i)_{i\in J \cup \{j\}}$ is a finite open 
subcover of $K$.
\item For all ordinal number $\beta$ with $\beta\leq\alpha$, 
  \begin{equation} \label{eq:03}
        K^{(\beta)}=\biguplus_{m\in\omega}K_m^{(\beta)}\uplus\{b\}.
  \end{equation}
Last expression is obtained by using Transfinite Induction on $\beta$.  In fact, 
the case $\beta=0$ is immediate from \eqref{eq:02}. Now, we suppose that 
for a given ordinal number $\beta<\alpha$,~\eqref{eq:03} holds. Since $\beta+1\leq \alpha$, 
we have that $K_m^{(\alpha)} \subset K^{(\alpha)} \subset K^{(\beta+1)}$, 
for all $m\in\omega$.  Moreover, since $x_m\in K_m^{(\alpha)} \subset K^{(\beta+1)}$, 
for all $m\in\omega$, and $x_m \rightarrow b$ as $m \rightarrow +\infty$, we 
see that $b\in K^{(\beta+1)}$.  Therefore,
    \begin{equation} \label{eq:03a}
        \biguplus_{m\in\omega}K_m^{(\beta+1)}\uplus\{b\}
				\subset K^{(\beta+1)}. 
    \end{equation}
In order to prove the other inclusion, let $x\in K^{(\beta+1)}$. Using the 
induction hypothesis, we see that
    \begin{equation*}
        K^{(\beta+1)}\subset K^{(\beta)}= \biguplus_{m\in\omega}
				K_m^{(\beta)}\uplus\{b\}.
    \end{equation*}
Therefore, either $x=b$ or $x\in K_m^{(\beta)}$ for some $m\in\omega$. If $x = b$, then 
there is nothing else to prove.  If $x\neq b$, there exists $M\in\omega$ such that
    \begin{equation*}
        x\in K_M^{(\beta)}\subset K_M \subset (x_{M-1},x_M].
    \end{equation*}
We claim that $x\in K_M^{(\beta+1)}$.  To prove the last assertion, we suppose, by 
contradiction, that $x\notin K_M^{(\beta+1)}$.  Thus, $x$ is an isolated point of 
$K_M^{(\beta)}$.  However, we know that $\{x_M\} =  K_M^{(\alpha)}\subset K_M^{(\beta+1)}$. 
Then, $x\neq x_M$.  Thus, there exists $\epsilon>0$ such that 
$(x-\epsilon,x+\epsilon)\subset (x_{M-1},x_M)$ and 
    \begin{equation*}
        (x-\epsilon,x+\epsilon)\cap K_M^{(\beta)}=\{x\}.
    \end{equation*}
Moreover, since $(x-\epsilon,x+\epsilon)\subset (x_{M-1},x_M)$, we conclude that 
for all $m\in\omega \smallsetminus \{M\}$, 
    \begin{equation*}
        (x-\epsilon,x+\epsilon)\cap K_m^{(\beta)}=\varnothing.
    \end{equation*}
Hence,
    \begin{align*}
       \{x\}&=(x-\epsilon,x+\epsilon)\cap 
			 \left(\biguplus_{m\in\omega}K_m^{(\beta)}\uplus\{b\}\right)\\[3pt]
       &=(x-\epsilon,x+\epsilon)\cap K^{(\beta)},
    \end{align*}
where in the last equality we have used the assumption that~\eqref{eq:03} 
holds for $\beta$.  Even so, this last expression is a contradiction 
with the fact that $x\in K^{(\beta+1)}$. Then, $x\in K_M^{(\beta+1)}$. Thus,
    \begin{equation} \label{eq:03b}
        K^{(\beta+1)}\subset \biguplus_{m\in\omega}K_m^{(\beta+1)}
				\uplus\{b\}.
    \end{equation}
Using~\eqref{eq:03a} and~\eqref{eq:03b}, we get
    \begin{equation*}
        K^{(\beta+1)} = \biguplus_{m\in\omega}K_m^{(\beta+1)}
				\uplus\{b\}.
    \end{equation*}
Finally, let $\gamma\neq 0$ be a limit ordinal such that $\gamma\leq\alpha$ 
and suppose that 
    \begin{equation} \label{eq:03c}
        K^{(\delta)}= \biguplus_{m\in\omega}K_m^{(\delta)}
				\uplus\{b\},
    \end{equation}
for all ordinal number $\delta$ such that $\delta<\gamma$.  Following a 
similar procedure to the one performed above to obtain~\eqref{eq:03a}, 
we have that 
    \begin{equation} \label{eq:03d}
        \biguplus_{m\in\omega}K_m^{(\gamma)}
				\uplus\{b\}\subset K^{(\gamma)}.
    \end{equation}
To obtain the other inclusion, let $x\in K^{(\gamma)}$. Using the induction 
hypothesis~\eqref{eq:03c}, we see that
    \begin{equation*}
        K^{(\gamma)} := \bigcap_{\delta<\gamma}  K^{(\delta)} = 
				\bigcap_{\delta<\gamma}\left(\biguplus_{m\in\omega}K_m^{(\delta)}
				\uplus\{b\}\right).
    \end{equation*}
Then, either $x=b$ or for all ordinal number $\delta$ such that 
$\delta<\gamma$, there exists $m\in\omega$ such that 
$x\in K_m^{(\delta)}$. If $x=b$, then there is nothing left to prove.  If 
$x\neq b$, there  exists $M\in\omega$ such that 
$x\in K_M^{(0)}=K_M\subset (x_{M-1},x_M]$.  We claim now that for all ordinal 
number $\delta$ such that $\delta<\gamma$, $x \in K_M^{(\delta)}$. In fact, 
we suppose, by contradiction, that there is an ordinal number $\delta_0$ with 
$\delta_0<\gamma$ and such that $x \notin K_M^{(\delta_0)}$.  However, we know 
that there exists $m_0\in\omega$ with $m_0 \neq M$ such that 
$x\in K_{m_0}^{(\delta_0)} \subset K_{m_0} \subset (x_{m_0-1},x_{m_0}]$.  Since 
$m_0 \neq M$, we get $(x_{m_0-1},x_{m_0}] \cap (x_{M-1},x_M] = \varnothing$, which is 
a contradiction with the fact that $x \in (x_{m_0-1},x_{m_0}] \cap (x_{M-1},x_M]$.  
Therefore,   
    \begin{equation*}
        x\in\bigcap_{\delta<\gamma}K_M^{(\delta)}=:K_M^{(\gamma)}
				\subset \biguplus_{m\in\omega}K_m^{(\gamma)}.
    \end{equation*}
Then,
    \begin{equation} \label{eq:03e}
        K^{(\gamma)}\subset\biguplus_{m\in\omega}K_m^{(\gamma)}\uplus\{b\}.
    \end{equation}
By~\eqref{eq:03d} and~\eqref{eq:03e}, we have that 
    \begin{equation*}
        K^{(\gamma)}=\biguplus_{m\in\omega}K_m^{(\gamma)}\uplus\{b\}.
    \end{equation*}
Hence,~\eqref{eq:03} holds for all ordinal number $\beta$ such that 
$\beta\leq\alpha$. 
\end{itemize}
Applying now~\eqref{eq:03} to the ordinal number $\alpha$, and since  
$K_m^{(\alpha)}=\{x_m\}$, for all $m\in\omega$, we conclude that 
    \begin{align*}
        K^{(\alpha)}&= \biguplus_{m\in\omega}K_m^{(\alpha)}\uplus\{b\}\\
            &=\biguplus_{m\in\omega}\{x_m\}\uplus\{b\}\\
            &=\{x_m:m\in\omega\}\uplus\{b\}.
    \end{align*}
Therefore,
    \begin{equation*}
        K^{(\alpha+1)}=(K^{(\alpha)})'=\{b\}.
    \end{equation*}	
\item[(c)]
Finally, let $\lambda\neq 0$ be a countable limit ordinal number.  We suppose 
that for all ordinal number $\rho$ such that $\rho<\lambda$ and for 
all $c,d\in\mathbb{R}$ such that $c<d$, there is a set 
$\widetilde{K}\in \mathcal{K}$ such that $\widetilde{K}\subset (c,d]$ 
and $\widetilde{K}^{(\rho)}=\{d\}$.  Since $\lambda$ is a countable limit 
ordinal number, there exits a strictly increasing sequence 
$(\rho_n)_{n\in\omega}$ in $\Omega$ such that $\rho_n<\lambda$, 
for all $n\in\omega$, and $\sup\{\rho_n:n\in \omega\}=\lambda$.  Let 
$a,b\in\mathbb{R}$ be such that $a<b$. We take a strictly increasing 
sequence, $(x_n)_{n\in\omega}$, in $(a,b]$ such that $x_n \rightarrow b$ 
as $n \rightarrow +\infty$. Defining again $x_{-1}=a$ and applying the 
hypothesis to the real numbers $x_{m-1} < x_m$,  and the ordinal 
number $\rho_m$, $m\in\omega$, it follows that there exists a sequence 
of sets $(K_m)_{m\in\omega}$ such that for all $m\in\omega$, 
$K_m\in \mathcal{K}$, $K_m\subset (x_{m-1},x_m]$ and 
$K_m^{(\rho_m)}=\{x_m\}$.  We also define, as in the previous case, the set
  \begin{equation} \label{eq:04}
    K:=\biguplus_{m\in\omega} K_m \uplus\{b\}.
  \end{equation}
It can be shown, similarly to the case (b) above, that the set $K$, 
defined in~\eqref{eq:04}, satisfies the following properties:
\begin{itemize}
  \item $K\subset (a,b]$.
  \item $K$ is countable.
  \item $K$ is compact.
  \item For all ordinal number $\rho$ with $\rho\leq\lambda$, 
    \begin{equation} \label{eq:04a}
          K^{(\rho)}=\biguplus_{m\in\omega}K_m^{(\rho)}\uplus\{b\}.
    \end{equation}
Last expression is obtained by using Transfinite Induction on $\rho$.  
In fact, the case $\rho=0$ is immediate from \eqref{eq:04}. Now, we suppose 
that for a given ordinal number $\rho<\lambda$,~\eqref{eq:04a} holds. 
Since $\lambda$ is a limit ordinal, we have that $\rho+1<\lambda$, and then 
there exists $N\in\omega$ such that $\rho+1<\rho_m$ for all $m \in \omega$ 
with $m>N$. Therefore, 
$x_m\in K_m^{(\rho_m)} \subset K_m^{(\rho+1)} \subset K^{(\rho+1)}$, 
for all $m \in \omega$ with $m>N$, and since $x_m \rightarrow b$ as 
$m \rightarrow +\infty$, we see that $b\in K^{(\rho+1)}$. Then,
    \begin{equation} \label{eq:04b}
        \biguplus_{m\in\omega}K_m^{(\rho+1)}\uplus\{b\}\subset K^{(\rho+1)}. 
    \end{equation}
In order to prove the other inclusion, let $x\in K^{(\rho+1)}$. Using 
the induction hypothesis, we see that
    \begin{equation*}
        K^{(\rho+1)}\subset K^{(\rho)}= 
				\biguplus_{m\in\omega}K_m^{(\rho)}\uplus\{b\}.
    \end{equation*}
Therefore, either $x=b$ or $x\in K_m^{(\rho)}$ for some $m\in\omega$. 
If $x = b$, then there is nothing else to prove.  If $x\neq b$, there 
exists $M\in\omega$ such that
    \begin{equation*}
        x\in K_M^{(\rho)}\subset K_M \subset (x_{M-1},x_M].
    \end{equation*}
Since $K_M^{(\rho_M+1)}=\varnothing$, we have that $\rho< \rho_M+1$, 
that is $\rho\leq\rho_M$. We claim that $x\in K_M^{(\rho+1)}$. To prove 
the last assertion, we suppose, by contradiction, that $x\notin K_M^{(\rho+1)}$.  
Thus, $x$ is an isolated point of $K_M^{(\rho)}$. However, we know 
that $K_M\cap K_{M+1}=\varnothing$, then $x\notin K_{M+1}$.  Hence, 
$x\notin K_{M+1}^{(\rho)}$. Thus, there exists $\epsilon>0$ such that 
$(x-\epsilon,x+\epsilon)\subset (x_{M-1},x_{M+1})$, 
$(x-\epsilon,x+\epsilon)\cap K_{M+1}^{(\rho)}=\varnothing$ and 
    \begin{equation*}
        (x-\epsilon,x+\epsilon)\cap K_M^{(\rho)}=\{x\},
    \end{equation*}
where in the second expression above we have used the fact that 
$K_{M+1}^{(\rho)}$ is a closed subset of $\mathbb{R}$.	 Moreover, 
since $(x-\epsilon,x+\epsilon)\subset (x_{M-1},x_{M+1})$, we 
conclude that for all $m\in\omega \smallsetminus \{M\}$, 
    \begin{equation*}
        (x-\epsilon,x+\epsilon)\cap K_m^{(\rho)}=\varnothing.
    \end{equation*}
Hence,
    \begin{align*}
    \{x\}&=(x-\epsilon,x+\epsilon)\cap 
		\left(\biguplus_{m\in\omega}K_m^{(\rho)}\uplus\{b\}\right)\\[3pt]
    &=(x-\epsilon,x+\epsilon)\cap K^{(\rho)},
    \end{align*}
where in the last equality we have used the assumption 
that~\eqref{eq:04a} holds for $\rho$. Nevertheless, this last 
expression is a contradiction with the fact that $x\in K^{(\rho+1)}$. 
Then, $x\in K_M^{(\rho+1)}$.  Thus,
    \begin{equation} \label{eq:04c}
        K^{(\rho+1)}\subset \biguplus_{m\in\omega}K_m^{(\rho+1)}
				\uplus\{b\}.
    \end{equation}
Using~\eqref{eq:04b} and~\eqref{eq:04c}, we get
    \begin{equation*}
        K^{(\rho+1)} = \biguplus_{m\in\omega}K_m^{(\rho+1)}
				\uplus\{b\}.
    \end{equation*}
Finally, let $\gamma\neq 0$ be a limit ordinal such that 
$\gamma\leq\lambda$ and suppose that 
    \begin{equation} \label{eq:04d}
        K^{(\delta)}= \biguplus_{m\in\omega}K_m^{(\delta)}
				\uplus\{b\},
    \end{equation}
for all ordinal number $\delta$ such that $\delta<\gamma$. We have, 
using~\eqref{eq:04d}, that 
    \begin{align} 
        \biguplus_{m\in\omega}K_m^{(\gamma)}\uplus\{b\}
        &=  \biguplus_{m\in\omega}\left(\bigcap_{\delta<\gamma}
				    K_m^{(\delta)}\right)\uplus\{b\}\notag\\
        &\subset  \bigcap_{\delta<\gamma}\left(\biguplus_{m\in\omega}
				    K_m^{(\delta)}\right)\uplus\{b\}\notag\\
        &=  \bigcap_{\delta<\gamma}\left(\biguplus_{m\in\omega}
				    K_m^{(\delta)}\uplus\{b\}\right)\notag\\
        &=\bigcap_{\delta<\gamma}K^{(\delta)}\notag\\
        &= K^{(\gamma)}.\label{eq:04e}
    \end{align}    
To get the other inclusion, we can follow a similar procedure to the one 
performed above to obtain~\eqref{eq:03e}.  Thus, we have that
	\begin{equation}\label{eq:04f}
	   K^{(\gamma)}\subset \biguplus_{m\in\omega}K_m^{(\gamma)}
	   \uplus\{b\}.
	\end{equation}
By~\eqref{eq:04e} and~\eqref{eq:04f}, we obtain 
    \begin{equation*}
        K^{(\gamma)}=\biguplus_{m\in\omega}K_m^{(\gamma)}
				\uplus\{b\}.
    \end{equation*}
Consequently,~\eqref{eq:04a} holds for all ordinal number $\rho$ 
such that $\rho\leq\lambda$. 
\end{itemize}
Furthermore, since for all $m\in\omega$, $\rho_m+1<\lambda$, it follows 
that for all $m\in\omega$
    \begin{equation*}
        K_m^{(\lambda)}\subset K_m^{(\rho_m+1)}
				=(K_m^{(\rho_m)})'=(\{x_m\})'=\varnothing.
    \end{equation*}
Therefore,
    \begin{equation*}
        K^{(\lambda)}=\biguplus_{m\in\omega}K_m^{(\lambda)}
				\uplus\{b\}=\{b\}.
    \end{equation*}
\end{enumerate}
From (a), (b) and (c), the theorem is proved.
\end{proof}
The next lemma will be used in the proof of Corollary~\ref{Cor1} 
below.
\begin{Lemma}\label{Lemtech0}
Suppose that $n\in \omega$. Let $F_1, F_2, \ldots, F_n$ be closed subsets 
of $\mathbb{R}$.  Then, for all ordinal number 
$\alpha \in \mathbf{OR}$, we have that 
\begin{equation*}
	\left(\bigcup_{k=1}^n F_k\right)^{(\alpha)} 
	= \bigcup_{k=1}^n F_k^{(\alpha)}.
\end{equation*}
\end{Lemma}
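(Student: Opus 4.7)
The plan is to proceed by Transfinite Induction on $\alpha$, as suggested by the analogous inductions performed in Theorem~\ref{Th1}. The base case $\alpha = 0$ is immediate from the definition $A^{(0)} = A$, since both sides reduce to $\bigcup_{k=1}^{n} F_k$.

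For the successor step, assume the identity for a given ordinal $\beta$. Applying the induction hypothesis and the definition of the $(\beta+1)$-th derivative, the problem reduces to showing that $(A \cup B)' = A' \cup B'$ for arbitrary subsets $A, B$ of $\mathbb{R}$, and then iterating $n-1$ times. The inclusion $A' \cup B' \subseteq (A \cup B)'$ is an immediate consequence of the monotonicity of the derived-set operator. For the reverse inclusion, I would take $x \in (A \cup B)'$ and argue by contradiction: if $x \notin A'$ and $x \notin B'$, there exist $\epsilon_1, \epsilon_2 > 0$ such that $(x-\epsilon_1, x+\epsilon_1) \cap A \subseteq \{x\}$ and $(x-\epsilon_2, x+\epsilon_2) \cap B \subseteq \{x\}$; taking $\epsilon := \min\{\epsilon_1, \epsilon_2\}$ would then contradict $x \in (A \cup B)'$.

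For the limit step, let $\lambda \neq 0$ be a limit ordinal and assume the identity for every $\delta < \lambda$. By the definition of the $\lambda$-th derivative and the induction hypothesis,
\begin{equation*}
	\left(\bigcup_{k=1}^{n} F_k\right)^{(\lambda)}
	= \bigcap_{\delta < \lambda} \left(\bigcup_{k=1}^{n} F_k\right)^{(\delta)}
	= \bigcap_{\delta < \lambda} \bigcup_{k=1}^{n} F_k^{(\delta)}.
\end{equation*}
The inclusion $\bigcup_{k=1}^{n} F_k^{(\lambda)} = \bigcup_{k=1}^{n} \bigcap_{\delta < \lambda} F_k^{(\delta)} \subseteq \bigcap_{\delta < \lambda} \bigcup_{k=1}^{n} F_k^{(\delta)}$ is a purely set-theoretic fact. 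The converse inclusion is where I expect the main obstacle to lie, and is the only place where the finiteness of $n$ is truly used.

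To handle this converse, given $x \in \bigcap_{\delta < \lambda} \bigcup_{k=1}^{n} F_k^{(\delta)}$, for each $\delta < \lambda$ select some index $k(\delta) \in \{1, \dots, n\}$ with $x \in F_{k(\delta)}^{(\delta)}$. Since $\lambda$ is a nonzero limit ordinal the set $\{\delta : \delta < \lambda\}$ is infinite, while the range $\{1, \dots, n\}$ of the function $\delta \mapsto k(\delta)$ is finite; thus by a pigeonhole argument there exists $k_0 \in \{1, \dots, n\}$ such that $\{\delta < \lambda : k(\delta) = k_0\}$ is cofinal in $\lambda$. Invoking the fact, recalled in the introduction, that $(F_{k_0}^{(\delta)})_{\delta \in \mathbf{OR}}$ is a decreasing family, I conclude that $x \in F_{k_0}^{(\delta)}$ for every $\delta < \lambda$, hence $x \in F_{k_0}^{(\lambda)} \subseteq \bigcup_{k=1}^{n} F_k^{(\lambda)}$, which completes the limit step and the proof.
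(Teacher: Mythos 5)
Your proposal is correct and takes essentially the same route as the paper: transfinite induction in which the successor step comes down to the fact that the derived set of a finite union equals the union of the derived sets, and the limit step is settled by combining the finiteness of the index set with the decreasingness of $(F_k^{(\delta)})_{\delta<\lambda}$ (the paper reduces to $n=2$ and argues by contradiction with $\max\{\beta_1,\beta_2\}$, which is the contrapositive of your pigeonhole argument). One small repair: for a general limit ordinal $\lambda$ the mere infinitude of $\{\delta:\delta<\lambda\}$ does not force a fibre of $\delta\mapsto k(\delta)$ to be cofinal (an infinite subset of $\lambda$ can be bounded), so you should instead observe that a finite union of bounded subsets of $\lambda$ is bounded, whence at least one fibre is cofinal, and the rest of your argument goes through unchanged.
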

\begin{proof}
The general case, $n\in \omega$, is a consequence of the result  
for $n=2$ and the Principle of Finite Induction.  Thus, 
we suppose that $n=2$. We will now use Transfinite Induction.
\begin{enumerate}[label=(\alph*),leftmargin=*]
\item
If $\alpha = 0$, then there is nothing else to prove.
\item
We now suppose that for a given ordinal number $\alpha\in\mathbf{OR}$,  
$(F_1\cup F_2)^{(\alpha)} = F_1^{(\alpha)}\cup F_2^{(\alpha)}$. 
Therefore,  
\begin{equation*}
	(F_1\cup F_2)^{(\alpha+1)} = \big((F_1\cup F_2)^{(\alpha)}\big)'
	=\big(F_1^{(\alpha)}\cup F_2^{(\alpha)}\big)' 
	=F_1^{(\alpha+1)}\cup F_2^{(\alpha+1)},
\end{equation*}
where in the last equation we have used the fact that the derived set of a 
finite union of subsets of a metric space equals the union of their derived sets. 
\item 
Finally, let $\lambda\neq 0$ be a limit ordinal number. We suppose that for all 
$\beta\in\mathbf{OR}$ such that $\beta<\lambda$, 
$(F_1\cup F_2)^{(\beta)} = F_1^{(\beta)}\cup F_2^{(\beta)}$. Then, 
\begin{align*}
  F_1^{(\lambda)}\cup F_2^{(\lambda)} 
	  & = \bigcap_{\beta<\lambda} F_1^{(\beta)} \cup \bigcap_{\beta<\lambda} F_2^{(\beta)}\\
	  & \subset \bigcap_{\beta<\lambda} (F_1^{(\beta)} \cup  F_2^{(\beta)})\\
	  & = \bigcap_{\beta<\lambda} (F_1 \cup  F_2)^{(\beta)}\\
	  & = (F_1 \cup  F_2)^{(\lambda)}.
\end{align*}
In order to prove the other inclusion, we take $x\in (F_1 \cup  F_2)^{(\lambda)}$.
We suppose, for the sake of contradiction, that $x\not\in F_1^{(\lambda)}$ and 
$x\not\in F_2^{(\lambda)}$.  Thus, there exist $\beta_1, \beta_2 \in \mathbf{OR}$,
with $\beta_1<\lambda$ and $\beta_2<\lambda$, such that $x\not\in F_1^{(\beta_1)}$ and 
$x\not\in F_2^{(\beta_2)}$. If $\beta_1\leq \beta_2$, then  
$F_1^{(\beta_2)}\subset F_1^{(\beta_1)}$.  Hence, 
$x\not\in F_1^{(\beta_2)}\cup F_2^{(\beta_2)}=(F_1\cup F_2)^{(\beta_2)}$,
which contradicts the fact that 
$x\in (F_1 \cup  F_2)^{(\lambda)}=\bigcap_{\beta<\lambda} (F_1 \cup  F_2)^{(\beta)}$. 
The proof of the other case, $\beta_2 < \beta_1$, is similar.  Therefore,
\begin{equation*}
	(F_1 \cup  F_2)^{(\lambda)} = F_1^{(\lambda)}\cup F_2^{(\lambda)}.
\end{equation*}
\end{enumerate}
Consequently, the lemma is proved.
\end{proof}
The following result is a generalization of Theorem~\ref{Th1}.
\begin{Corollary}\label{Cor1}
Given any countable ordinal number $\alpha$ and given any $p\in\omega$, 
there exists $K\in \mathcal{K}$ such that $|K^{(\alpha)}|=p$.
\end{Corollary}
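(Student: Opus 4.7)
The plan is to reduce the corollary quickly to Theorem~\ref{Th1} combined with Lemma~\ref{Lemtech0}. The idea is that Theorem~\ref{Th1} already gives us building blocks inside any prescribed half-open interval whose $\alpha$-th derivative is a specified single point, and Lemma~\ref{Lemtech0} tells us that the $\alpha$-th derivative distributes over finite unions of closed sets. Putting $p$ such pairwise-disjoint blocks together therefore produces a $K\in\mathcal{K}$ whose $\alpha$-th derivative consists of exactly $p$ points.

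More concretely, I would proceed as follows. First dispose of the trivial case $p=0$ by taking $K=\varnothing$, which is an element of $\mathcal{K}$ with $K^{(\alpha)}=\varnothing$. For $p\geq 1$, pick any $p+1$ real numbers $a_0<a_1<\cdots<a_p$ so that the half-open intervals $(a_{i-1},a_i]$, $i=1,\ldots,p$, are pairwise disjoint. Applying Theorem~\ref{Th1} to the countable ordinal $\alpha$ and to each pair $a_{i-1}<a_i$, we obtain sets $K_1,\ldots,K_p\in\mathcal{K}$ with $K_i\subset(a_{i-1},a_i]$ and $K_i^{(\alpha)}=\{a_i\}$. Define
\begin{equation*}
    K:=\bigcup_{i=1}^p K_i.
\end{equation*}

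Since each $K_i$ is compact and countable, $K$ is a finite union of compact countable sets, hence itself compact and countable, so $K\in\mathcal{K}$. Because the $K_i$ are closed subsets of $\mathbb{R}$, Lemma~\ref{Lemtech0} yields
\begin{equation*}
    K^{(\alpha)}=\Bigl(\bigcup_{i=1}^p K_i\Bigr)^{(\alpha)}=\bigcup_{i=1}^p K_i^{(\alpha)}=\{a_1,a_2,\ldots,a_p\},
\end{equation*}
and the right-hand side is a set of cardinality exactly $p$ since the $a_i$ are pairwise distinct. Thus $|K^{(\alpha)}|=p$, as desired.

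There is essentially no hard step here: the work was already done in Theorem~\ref{Th1} (which produces the one-point-derivative building blocks by transfinite induction) and in Lemma~\ref{Lemtech0} (which handles the interaction between finite unions and transfinite derivatives). The only mild point to be careful about is the disjointness of the intervals, which ensures both that $K^{(\alpha)}$ has $p$ distinct elements rather than fewer, and that no unintended limit points appear between the blocks; separating the intervals by arbitrary gaps $a_i<a_{i+1}$ (rather than abutting them at a common endpoint) makes this automatic.
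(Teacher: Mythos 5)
Your proof is correct and follows essentially the same route as the paper: handle $p=0$ with the empty set, then apply Theorem~\ref{Th1} to $p$ pairwise disjoint intervals and use Lemma~\ref{Lemtech0} to distribute the $\alpha$-th derivative over the finite union. Your version is just slightly more explicit about the choice of intervals $(a_{i-1},a_i]$ and why the resulting derivative has exactly $p$ distinct points.
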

\begin{proof}
Let $\alpha \in \Omega$. If $p=0$, we take $K=\varnothing$.  
If $p\in\omega \smallsetminus\{0\}$, it is enough to apply 
Theorem~\ref{Th1} to a collection of $p$ pairwise disjoint intervals.  Thus, 
for all $k \in \{1,\ldots, p\}$, there exists $K_k\in\mathcal{K}$, such that 
$K_k^{(\alpha)}$ has only one element, and $K_i \cap K_j = \varnothing$ for 
$i, j \in \{1, \ldots, p\}$ with $i \neq j$. We now define
\begin{equation*}
	K:=\biguplus_{k=1}^p K_k.  
\end{equation*}
Hence, $K\in \mathcal{K}$ and, using Lemma~\ref{Lemtech0}, we get
\begin{equation*}
	K^{(\alpha)}=\biguplus_{k=1}^p K_k^{(\alpha)}.
\end{equation*}
Therefore, $K^{(\alpha)}$ has exactly $p$ elements.
\end{proof}
\begin{Remark}
Even though the proofs of~\eqref{eq:03} and~\eqref{eq:04a} are similar, it is worth 
mentioning that they are not identical.  In fact, to prove~\eqref{eq:03} we have that  
$\alpha \in \Omega$ and for all $m \in \omega$, $K_m^{(\alpha)}=\{x_m\}$.  On 
the other hand, to obtain~\eqref{eq:04a} we consider $\lambda \neq 0$ a 
countable limit ordinal and a strictly increasing sequence 
$(\rho_m)_{m\in\omega}$ in $\Omega$, with $\sup\{\rho_m:m\in \omega\}=\lambda$, 
such that for all $m \in \omega$, $\rho_m<\lambda$ and $K_m^{(\rho_m)}=\{x_m\}$, where 
$\rho_m$ depends on $m$. \\
In addition, we point out that the process developed to obtain~\eqref{eq:04e} 
can also be used to get~\eqref{eq:03a}, \eqref{eq:03d} and~\eqref{eq:04b}.
\end{Remark}

\section{Some results concerning Cantor-Bendixson's derivative}

It is a well-known fact that, for all $K\in\mathcal{K}$, 
$(K^{(\alpha)})_{\alpha\in\mathbf{OR}}$ is a decreasing family of 
elements of $\mathcal{K}$.  The following two results were first 
proved by G. Cantor in~\cite{Cantor1883s} and they 
imply that for all $K\in\mathcal{K}$, 
$(K^{(\alpha)})_{\alpha\in\mathbf{OR}}$ is in fact a strictly 
decreasing family of sets in $\mathcal{K}$ up to a countable 
ordinal number and such that all of its subsequent derivative 
sets are empty.
\begin{Lemma}
If $K\in\mathcal{K}$ and $K\neq\varnothing$, then $K'\neq K$.
\end{Lemma}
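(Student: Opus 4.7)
The plan is to argue by contradiction: suppose $K' = K$, and derive that $K$ must be uncountable, which contradicts $K \in \mathcal{K}$. This is the classical perfect-set argument of Cantor, which fits naturally here because $K$ is a compact (hence complete) metric space.

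First I would observe that if $K' = K$ and $K \neq \varnothing$, then $K$ has no isolated points; in particular $K$ is infinite, so I can enumerate $K = \{y_n : n \in \omega\}$. Then I would recursively build a decreasing sequence of nondegenerate closed intervals $(I_n)_{n \in \omega}$ in $\mathbb{R}$ such that, for every $n$, $I_n \cap K \neq \varnothing$ and $y_n \notin I_n$. At the base step, using that $y_0$ is a limit point of $K$, I pick a point $z_0 \in K \smallsetminus \{y_0\}$ and take $I_0$ to be any closed interval around $z_0$ small enough to exclude $y_0$. At stage $n > 0$, the set $I_{n-1} \cap K$ is nonempty by the inductive hypothesis; since every point of $K$ is a limit point of $K$, this intersection is in fact infinite, so I can choose $z_n \in (I_{n-1} \cap K) \smallsetminus \{y_n\}$ and then pick a closed interval $I_n \subset I_{n-1}$ containing $z_n$ but disjoint from $\{y_n\}$.

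Finally I would invoke compactness: $\bigl(I_n \cap K\bigr)_{n \in \omega}$ is a decreasing sequence of nonempty closed subsets of the compact set $K$, so its intersection is nonempty. Any element $z$ of $\bigcap_{n \in \omega}(I_n \cap K)$ lies in $K$, hence $z = y_m$ for some $m \in \omega$, yet $z \in I_m$ while $y_m \notin I_m$ by construction, a contradiction.

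The only delicate point is the inductive construction in the second paragraph, specifically the step ensuring $I_{n-1} \cap K$ contains a point other than $y_n$. This hinges on the fact that, under the assumption $K' = K$, every nonempty relatively open subset of $K$ is infinite, which is precisely what lets the diagonalization go through. Everything else is routine: nested-interval/compactness on one side, a standard enumeration diagonal on the other.
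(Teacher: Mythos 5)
The paper itself offers no proof of this lemma (it is only attributed to Cantor), so yours fills a real gap; the route you take---a nonempty compact set equal to its own derived set is perfect, hence uncountable, contradicting countability---is the classical and correct one, and the combination of a diagonalization against an enumeration of $K$ with the finite-intersection property of the compact set $K$ is sound in outline. One step needs tightening: the claim that $I_{n-1}\cap K$ is infinite does \emph{not} follow merely from $K'=K$, because $I_{n-1}\cap K$ is a relatively closed, not relatively open, subset of $K$. If the only point of $K$ in $I_{n-1}=[a,b]$ is an endpoint approached by $K$ only from outside $[a,b]$, the intersection is a singleton, and if that singleton happens to be $\{y_n\}$ the recursion stalls. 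Your own closing remark invokes the correct fact (nonempty relatively open subsets of $K$ are infinite), so the repair is simply to strengthen the invariant to $\accentset{\circ}{I}_n\cap K\neq\varnothing$: at each stage choose $z_n\in\accentset{\circ}{I}_{n-1}\cap K$ with $z_n\neq y_n$ (possible because that set is open in $K$ and nonempty, hence infinite), and then set $I_n=[z_n-\varepsilon,z_n+\varepsilon]\subset\accentset{\circ}{I}_{n-1}$ with $\varepsilon<|z_n-y_n|$, so that $z_n$ lies in the interior of $I_n$ and the invariant propagates. With that adjustment the argument is complete; alternatively, the same idea can be packaged as the statement that a compact Hausdorff space is a Baire space, so the countable set $K$ cannot be the union of its singletons when $K=K'$ forces each singleton to be nowhere dense in $K$.
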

The above lemma implies the following theorem. 
\begin{Theorem}
If $K\in \mathcal{K}$, then there exists a countable ordinal 
number $\beta$ such that $K^{(\beta)}$ is finite.
\end{Theorem}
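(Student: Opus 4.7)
The plan is to strengthen the conclusion: I will show there is a countable ordinal $\beta$ with $K^{(\beta)}=\varnothing$, which certainly implies $K^{(\beta)}$ is finite. The key inputs are the preceding lemma together with the countability of $K$ itself.

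First, I would note that for each ordinal $\alpha$, the set $K^{(\alpha)}$ is closed in the compact set $K$, hence compact, and clearly countable, so $K^{(\alpha)}\in\mathcal{K}$. The preceding lemma therefore applies to it: whenever $K^{(\alpha)}\neq\varnothing$, we have $K^{(\alpha+1)}=(K^{(\alpha)})'\neq K^{(\alpha)}$, which combined with the inclusion $K^{(\alpha+1)}\subset K^{(\alpha)}$ yields the strict drop
\begin{equation*}
K^{(\alpha+1)}\subsetneq K^{(\alpha)}\qquad\text{whenever }K^{(\alpha)}\neq\varnothing.
\end{equation*}

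Second, I would use the fact that the assignment $\alpha\longmapsto K^{(\alpha)}$ takes values in the set $\mathcal{P}(K)$, so it cannot be injective on the proper class $\mathbf{OR}$. Hence there exist ordinals satisfying $K^{(\alpha)}=K^{(\alpha+1)}$; let $\beta$ denote the least such ordinal. By the strict-drop observation just established, $K^{(\beta)}$ must be empty.

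It remains to prove $\beta$ is countable, which is the main (but short) step. For every $\alpha<\beta$, the minimality of $\beta$ forces $K^{(\alpha+1)}\subsetneq K^{(\alpha)}$, so one may pick a point $x_\alpha\in K^{(\alpha)}\setminus K^{(\alpha+1)}$. I claim $\alpha\longmapsto x_\alpha$ is injective on $\beta$: if $\alpha<\alpha'<\beta$, then $x_{\alpha'}\in K^{(\alpha')}\subset K^{(\alpha+1)}$ while $x_\alpha\notin K^{(\alpha+1)}$, so $x_\alpha\neq x_{\alpha'}$. This embeds the ordinal $\beta$ into the countable set $K$, whence $\beta<\omega_1$, i.e.\ $\beta\in\Omega$. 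The only delicate point is recognizing that one need not invoke the second countability of $\mathbb{R}$ nor any basis argument: the countable length of the derivative sequence is extracted directly from $|K|\leq\aleph_0$ by this distinct-element trick.
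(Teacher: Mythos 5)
Your argument is correct and is exactly the derivation the paper leaves implicit when it says the preceding lemma ``implies'' the theorem: the lemma forces a strict drop $K^{(\alpha+1)}\subsetneq K^{(\alpha)}$ at every stage before the sequence stabilizes at $\varnothing$, and choosing a witness $x_\alpha\in K^{(\alpha)}\smallsetminus K^{(\alpha+1)}$ injects the stabilization ordinal into the countable set $K$, so it lies in $\Omega$. Your version even yields the stronger conclusion $K^{(\beta)}=\varnothing$, and the distinct-element trick is a clean substitute for the usual second-countability argument.
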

Since $\Omega$ is a well-ordered set, by the previous theorem, we see that 
for all $K\in\mathcal{K}$, there exists the smallest countable ordinal 
number $\alpha$ such that $K^{(\alpha)}$ is finite.   We can now 
give the next definition.
\begin{Definition}[Cantor-Bendixson's characteristic]
Let $K\in \mathcal{K}$.  We say that $(\alpha,p)\in \Omega\times\omega$ 
is the \emph{Cantor-Bendixson characteristic} of $K$ if $\alpha$ is 
the smallest countable ordinal number such that $K^{(\alpha)}$ is 
finite and $|K^{(\alpha)}|=p$. In this case, we write 
$\mathcal{CB}(K)=(\alpha,p)$.
\end{Definition}
By Theorem~\ref{Th1}, for all countable ordinal number $\alpha$, there 
exists a set $K \in \mathcal{K}$ having Cantor-Bendixson's characteristic 
$(\alpha,1)$. Furthermore, by Corollary~\ref{Cor1}, we have that
for all $p\in\omega\smallsetminus\{0\}$ and for all $\alpha \in \Omega$, 
there exists $K\in \mathcal{K}$ such that $\mathcal{CB}(K)=(\alpha,p)$. 
In addition, we obviously see that $\mathcal{CB}(\varnothing)=(0,0)$.
Moreover, we have the next result concerning the empty set.
\begin{Proposition}\label{Prop1}
Let $K\in\mathcal{K}$ be such that 
$\mathcal{CB}(K)=(\alpha,p) \in \Omega \times \omega$. Then, $p= 0$ 
if and only if $K=\varnothing$.
\end{Proposition}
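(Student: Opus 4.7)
The plan is to handle the two directions separately, with the nontrivial content lying in the converse. The forward direction is immediate: if $K=\varnothing$, then $K^{(0)}=\varnothing$ is already a finite set, so by minimality the smallest countable ordinal $\alpha$ with $K^{(\alpha)}$ finite is $\alpha=0$, and hence $p=|K^{(0)}|=|\varnothing|=0$.

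For the converse, suppose that $\mathcal{CB}(K)=(\alpha,0)$, i.e., $K^{(\alpha)}=\varnothing$, and argue by contradiction assuming $K\neq\varnothing$. I would split into the three standard cases according to the form of the ordinal $\alpha$. First, if $\alpha=0$, then $K=K^{(0)}=\varnothing$, which already contradicts the assumption. Second, if $\alpha=\beta+1$ is a successor ordinal, then by the minimality of $\alpha$ the set $K^{(\beta)}$ is \emph{not} finite; but $(K^{(\beta)})'=K^{(\alpha)}=\varnothing$ says $K^{(\beta)}$ has no accumulation points. Since $K^{(\beta)}\in\mathcal{K}$ is a compact subset of $\mathbb{R}$, the Bolzano--Weierstrass theorem forces every infinite subset of $K^{(\beta)}$ to have an accumulation point in $\mathbb{R}$, and closedness of $K^{(\beta)}$ places that point in $K^{(\beta)}$ itself; hence $K^{(\beta)}$ must be finite, a contradiction.

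Third, if $\alpha=\lambda\neq 0$ is a countable limit ordinal, then by the minimality of $\alpha$ the set $K^{(\gamma)}$ is infinite, and in particular nonempty, for every ordinal $\gamma<\lambda$. Since $\bigl(K^{(\gamma)}\bigr)_{\gamma<\lambda}$ is a decreasing family of nonempty compact subsets of $\mathbb{R}$, every finite subfamily has a smallest member which is nonempty, so the finite intersection property of compact sets yields
\begin{equation*}
  \varnothing \neq \bigcap_{\gamma<\lambda} K^{(\gamma)} = K^{(\lambda)} = K^{(\alpha)} = \varnothing,
\end{equation*}
which is the desired contradiction. Combining the three cases, $K$ must be empty.

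The argument is short and the main obstacle, if any, is simply recognizing which standard tool handles each case: Bolzano--Weierstrass in the successor case and the finite intersection property of compact sets in the limit case. Both rely essentially on the fact that each $K^{(\gamma)}$ inherits compactness from $K$, which is recorded in the introduction of the paper, so no additional machinery beyond the definition of $\mathcal{CB}$ and the definition of $\mathcal{K}$ is needed.
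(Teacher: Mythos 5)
Your proposal is correct and follows essentially the same route as the paper: the same three-way case split on whether $\alpha$ is zero, a successor, or a nonzero limit ordinal, using Bolzano--Weierstrass (equivalently, that an infinite compact set has a nonempty derived set) in the successor case and the Cantor intersection theorem in the limit case. The only cosmetic difference is that you phrase the converse as a proof by contradiction while the paper argues directly that $p\neq 0$.
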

\begin{proof}
If $K=\varnothing$, then $\mathcal{CB}(K)=(0,0)$, and thus the result holds.
\noindent
Now, we suppose that $K\neq\varnothing$. We consider three cases.
\begin{itemize}
\item If $\alpha=0$, then $K=K^{(0)}$ is finite.  Since 
      $K\neq\varnothing$, we have that $|K^{(0)}|\neq 0$. Hence, $p\neq0$.
\item We suppose now that $\alpha$ is a nonzero limit ordinal.  Then, 
      for all  $\beta \in \Omega$ such that $\beta <\alpha$, $K^{(\beta)}$ 
			is infinite.  Therefore, $(K^{(\beta)})_{\beta<\alpha}$ is a decreasing 
			nested family of nonempty compact subsets of $\mathbb{R}$. By using 
			the Cantor Intersection Theorem, we obtain
       \begin{equation*}
           K^{(\alpha)}=\bigcap_{\beta<\alpha}K^{(\beta)}\neq\varnothing.
       \end{equation*}
      Then, $|K^{(\alpha)}|\neq 0$, and so $p\neq0$.
\item Finally, we assume that $\alpha$ is a successor ordinal.  Thus, there 
      exists an ordinal $\beta \in \Omega$  such that $\beta+1=\alpha$. Since 
			$\beta<\alpha$, it follows that $K^{(\beta)}$ is infinite.  Then, 
       \begin{equation*}
           K^{(\alpha)}=K^{(\beta+1)}=(K^{(\beta)})'\neq\varnothing.
       \end{equation*}
    Therefore, $|K^{(\alpha)}|\neq 0$.  Hence, $p\neq0$.\qedhere
\end{itemize}
\end{proof}
\subsection{Partition of $\mathcal{K}$}
In this subsection, we show some general results concerning the 
equivalence relation $\sim$ defined on the set $\mathcal{K}$ 
by \eqref{relation1}.  
\begin{Proposition}
Let $K_1, K_2\in \mathcal{K}$ be such that $K_1\sim K_2$. Then, 
$K'_1\sim K'_2$. More precisely, if $f$ is a homeomorphism 
of $K_1$ onto $K_2$, then $f|_{K'_1}$ is also a homeomorphism 
of $K_1'$ onto $K_2'$.
\end{Proposition}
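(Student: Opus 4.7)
The plan is to show that any homeomorphism between compact countable subsets of $\mathbb{R}$ carries limit points to limit points, from which the conclusion follows by a standard compactness argument.

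First I would prove the containment $f(K_1') \subset K_2'$. Let $x \in K_1'$. Since $K_1 \subset \mathbb{R}$ is metrizable, there exists a sequence $(x_n)_{n\in\omega}$ in $K_1 \smallsetminus \{x\}$ with $x_n \to x$. Continuity of $f$ gives $f(x_n) \to f(x)$, and injectivity of $f$ gives $f(x_n) \neq f(x)$ for every $n \in \omega$. Hence $f(x)$ is a limit point of $K_2$, i.e.\ $f(x) \in K_2'$.

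Next I would obtain the reverse containment. Since $K_1$ is compact and $K_2$ is Hausdorff, the continuous bijection $f$ is a homeomorphism, and in particular $f^{-1}\colon K_2 \to K_1$ is a continuous bijection. Applying the previous paragraph to $f^{-1}$ yields $f^{-1}(K_2') \subset K_1'$, which is equivalent to $K_2' \subset f(K_1')$. Combining both containments gives $f(K_1') = K_2'$, so $f|_{K_1'}$ is a bijection from $K_1'$ onto $K_2'$.

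Finally, I would conclude that $f|_{K_1'}$ is a homeomorphism. It is continuous as the restriction of a continuous function. Its set-theoretic inverse is exactly $f^{-1}|_{K_2'}$, which is likewise continuous as a restriction. Alternatively, one may note that $K_1'$ is closed in the compact set $K_1$, hence compact, and $K_2' \subset \mathbb{R}$ is Hausdorff, so any continuous bijection between them is automatically a homeomorphism.

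No real obstacle is expected: the argument is sequential and uses only that $\mathbb{R}$ is a metric space together with the already-invoked fact that continuous bijections from compact to Hausdorff spaces are homeomorphisms. The only subtle point is the use of injectivity to ensure that $f(x_n)$ genuinely witnesses $f(x)$ as a limit point, rather than as an isolated point approached by a constant subsequence; this is handled in the first step above.
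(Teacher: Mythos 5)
Your proposal is correct and follows essentially the same route as the paper: the paper simply asserts that a homeomorphism carries limit points to limit points (applied to both $f$ and $f^{-1}$ to get $f(K_1')=K_2'$), and you supply the sequential verification of that fact together with the standard compact-to-Hausdorff argument. Nothing is missing.
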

\begin{proof}
Since the image of a limit point, under a homeomorphism, is also a limit 
point, we see that $f(K'_1)=K'_2$. Hence, 
$f|_{K'_1}\colon K'_1 \longmapsto K'_2$ is a homeomorphism. Therefore, 
$K'_1\sim K'_2$.
\end{proof}
By using Transfinite Induction, we get the following result.
\begin{Corollary}
Let $K_1,K_2\in \mathcal{K}$ be such that $K_1\sim K_2$, and 
let $\alpha$ be any ordinal number.  Then, 
$K^{(\alpha)}_1\sim K^{(\alpha)}_2$.  More precisely, if $f$ is 
a homeomorphism of $K_1$ onto $K_2$, then $f|_{K^{(\alpha)}_1}$ 
is also a homeomorphism of $K^{(\alpha)}_1$ onto $K^{(\alpha)}_2$.
\end{Corollary}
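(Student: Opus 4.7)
The plan is to proceed by transfinite induction on $\alpha$, with inductive statement ``$f|_{K^{(\alpha)}_1}$ is a homeomorphism of $K^{(\alpha)}_1$ onto $K^{(\alpha)}_2$''. The base case $\alpha=0$ is immediate, since $K^{(0)}_i = K_i$ for $i\in\{1,2\}$ and $f|_{K^{(0)}_1}=f$ is a homeomorphism of $K_1$ onto $K_2$ by hypothesis.

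For the successor step, suppose the statement holds for some $\beta\in\mathbf{OR}$. Since $K_1, K_2 \in \mathcal{K}$, the derived sets $K^{(\beta)}_1$ and $K^{(\beta)}_2$ also belong to $\mathcal{K}$. Therefore, the previous proposition applies to the homeomorphism $f|_{K^{(\beta)}_1} \colon K^{(\beta)}_1 \longmapsto K^{(\beta)}_2$ and yields that its further restriction to $(K^{(\beta)}_1)' = K^{(\beta+1)}_1$ is a homeomorphism onto $(K^{(\beta)}_2)' = K^{(\beta+1)}_2$. Noting that this doubly-restricted map coincides with $f|_{K^{(\beta+1)}_1}$ closes the inductive step.

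For the limit step, let $\lambda \neq 0$ be a limit ordinal and assume the statement holds for every $\gamma < \lambda$. Since $f$ is a bijection on $K_1$, it commutes with arbitrary intersections of subsets of $K_1$, so $f(K^{(\lambda)}_1) = f\bigl(\bigcap_{\gamma<\lambda} K^{(\gamma)}_1\bigr) = \bigcap_{\gamma<\lambda} f(K^{(\gamma)}_1) = \bigcap_{\gamma<\lambda} K^{(\gamma)}_2 = K^{(\lambda)}_2$, where the third equality uses the inductive hypothesis. Consequently $f|_{K^{(\lambda)}_1}$ is a bijection from $K^{(\lambda)}_1$ onto $K^{(\lambda)}_2$; it is continuous as a restriction of $f$, and since $K^{(\lambda)}_1$ is compact and $K^{(\lambda)}_2 \subset \mathbb{R}$ is Hausdorff, a standard compactness argument ensures its inverse is continuous as well. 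The conclusion $K^{(\alpha)}_1 \sim K^{(\alpha)}_2$ for every $\alpha\in\mathbf{OR}$ then follows.

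The only real obstacle is the limit step, and specifically the equality $f(K^{(\lambda)}_1)=K^{(\lambda)}_2$; however, this is secured cheaply by the bijectivity of $f$ together with the inductive hypothesis applied level by level. Passing from ``continuous bijection'' to ``homeomorphism'' in this case is routine via compactness, so no additional machinery is required.
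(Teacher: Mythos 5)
Your proof is correct and follows essentially the same route the paper intends: transfinite induction on $\alpha$, invoking the preceding proposition at successor stages and using injectivity of $f$ (so that $f$ commutes with intersections) at limit stages. The only cosmetic remark is that the limit step does not really need the compactness argument, since $f|_{K^{(\lambda)}_1}$ and its inverse $f^{-1}|_{K^{(\lambda)}_2}$ are both restrictions of continuous maps, hence continuous.
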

It follows from the last corollary that if $K_1,K_2\in \mathcal{K}$, 
$K_1\sim K_2$ and $\mathcal{CB}(K_1)=(\alpha,p) \in \Omega \times \omega$, 
then there exists a bijective function of $K^{(\alpha)}_1$ onto 
$K^{(\alpha)}_2$.  Therefore, $|K_2^{(\alpha)}|=|K_1^{(\alpha)}|=p$. 
Hence,  $\mathcal{CB}(K_2)=(\alpha,p)$. This last result about the 
Cantor-Bendixson characteristic, which was given by 
S. Mazurkiewicz and W. Sierpinski in~\cite{Sierpinski}, 
is expressed in the following theorem.
\begin{Theorem}\label{Th2}
If $K_1,K_2\in \mathcal{K}$ and $K_1\sim K_2$, then 
$\mathcal{CB}(K_1)=\mathcal{CB}(K_2)$.
\end{Theorem}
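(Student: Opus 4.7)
The plan is to deduce Theorem~\ref{Th2} directly from the preceding Corollary, which asserts that a homeomorphism $f\colon K_1\longmapsto K_2$ restricts for every ordinal $\alpha$ to a homeomorphism $f|_{K_1^{(\alpha)}}\colon K_1^{(\alpha)}\longmapsto K_2^{(\alpha)}$. The crux is to show that the \emph{minimal} ordinal and the \emph{cardinality} appearing in the Cantor--Bendixson characteristic are both invariants of the homeomorphism relation.

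First I would introduce notation by writing $\mathcal{CB}(K_1)=(\alpha_1,p_1)$ and $\mathcal{CB}(K_2)=(\alpha_2,p_2)$, with a homeomorphism $f\colon K_1\longmapsto K_2$ given by hypothesis. Applying the Corollary to the ordinal $\alpha_1$, the map $f|_{K_1^{(\alpha_1)}}$ is a bijection between $K_1^{(\alpha_1)}$ and $K_2^{(\alpha_1)}$, so $K_2^{(\alpha_1)}$ has exactly $p_1$ elements and is in particular finite. By the minimality defining $\alpha_2$, this forces $\alpha_2\leq \alpha_1$. Next, I would apply the same argument with the roles of $K_1$ and $K_2$ interchanged using the homeomorphism $f^{-1}\colon K_2\longmapsto K_1$ and the ordinal $\alpha_2$: then $f^{-1}|_{K_2^{(\alpha_2)}}$ is a bijection onto $K_1^{(\alpha_2)}$, showing $K_1^{(\alpha_2)}$ is finite of cardinality $p_2$, whence $\alpha_1\leq \alpha_2$. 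Combining, $\alpha_1=\alpha_2$.

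Once the ordinal parts agree, say $\alpha_1=\alpha_2=:\alpha$, the Corollary applied one more time with this $\alpha$ gives a bijection $f|_{K_1^{(\alpha)}}\colon K_1^{(\alpha)}\longmapsto K_2^{(\alpha)}$, so $p_1=|K_1^{(\alpha)}|=|K_2^{(\alpha)}|=p_2$. Hence $\mathcal{CB}(K_1)=(\alpha,p_1)=(\alpha,p_2)=\mathcal{CB}(K_2)$, which is the desired conclusion.

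Honestly there is no serious obstacle here, since the heavy lifting is done by the Corollary obtained by Transfinite Induction. The only subtlety to take care of in the write-up is the asymmetric use of the minimality condition: one has to be careful to exploit the defining property of $\alpha_1$ (respectively $\alpha_2$) as the \emph{least} countable ordinal producing a finite derived set, which is precisely what translates the homeomorphism invariance of finiteness of $K^{(\alpha)}$ into equality of the ordinals. Everything else is a direct restatement of the Corollary.
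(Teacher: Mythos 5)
Your proposal is correct and follows essentially the same route as the paper: both deduce the theorem from the Corollary stating that a homeomorphism $f$ of $K_1$ onto $K_2$ restricts to a homeomorphism of $K_1^{(\alpha)}$ onto $K_2^{(\alpha)}$ for every ordinal $\alpha$. If anything, your write-up is slightly more careful than the paper's, since you explicitly run the symmetric argument with $f^{-1}$ to justify that the \emph{minimal} ordinals $\alpha_1$ and $\alpha_2$ coincide, a point the paper passes over implicitly.
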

The above theorem shows that the Cantor-Bendixson characteristic is 
preserved for equivalent elements of $\mathcal{K}$, i.e., given 
$K\in\mathcal{K}$, we have that $\mathcal{CB}(K_1)=\mathcal{CB}(K)$, 
for all $K_1\in [K]$, where $[K]$ denotes the equivalence class of $K$. 
The reciprocal of Theorem \ref{Th2}, which was likewise given by 
S. Mazurkiewicz and W. Sierpinski in~\cite{Sierpinski}, is also true, 
and for completeness we give a more explicit proof of this fact 
in Theorem \ref{Th3} below. In the following, we consider any ordinal 
number as a topological space with the order topology.  Lemmas \ref{Lem2} 
to \ref{Lem5} will be used in the proof of Theorem \ref{Th3}.
\begin{Lemma}\label{Lem2}
Let $K\in\mathcal{K}$ be such that $\mathcal{CB}(K)=(1,1)$. Then, 
there exists a homeomorphism of $K$ onto $\omega+1$. 
\end{Lemma}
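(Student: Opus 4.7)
The plan is to enumerate the isolated points of $K$ as a sequence converging to the unique limit point, and then to write down the obvious bijection with $\omega+1$.

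From $\mathcal{CB}(K)=(1,1)$ I would first extract the structural information: $K^{(0)}=K$ must be infinite (otherwise the minimality of $\alpha=1$ fails), $K$ is countable, and $K'=K^{(1)}=\{x_0\}$ for a unique point $x_0\in K$. Hence every point of $K\smallsetminus\{x_0\}$ is isolated in $K$, and $K\smallsetminus\{x_0\}$ is countably infinite. Fix any enumeration $K\smallsetminus\{x_0\}=\{y_n:n\in\omega\}$ with the $y_n$ pairwise distinct.

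Next I would verify that $y_n\to x_0$. For any open $U\subset\mathbb{R}$ containing $x_0$, the set $K\smallsetminus U$ is closed and bounded, hence compact; any limit point of $K\smallsetminus U$ would be a limit point of $K$, so equal to $x_0\notin K\smallsetminus U$, giving $(K\smallsetminus U)'=\varnothing$. A compact subset of $\mathbb{R}$ with empty derived set is finite, so only finitely many $y_n$ lie outside $U$, which proves $y_n\to x_0$.

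Finally I would define $f\colon\omega+1\to K$ by $f(n)=y_n$ for $n\in\omega$ and $f(\omega)=x_0$. The map is a bijection by construction. Continuity at each $n<\omega$ is immediate since $\{n\}$ is open in $\omega+1$ and $\{y_n\}$ is open in $K$; continuity at $\omega$ amounts to showing that for every open $U\ni x_0$ there exists $N\in\omega$ with $f((N,\omega])\subset U$, which is exactly the convergence $y_n\to x_0$ established above. Since $\omega+1$ is compact and $K$ is Hausdorff, the continuous bijection $f$ is automatically a homeomorphism. The only step requiring real care is the convergence $y_n\to x_0$; once it is in place, the rest is routine.
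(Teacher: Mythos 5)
Your proof is correct and follows essentially the same route as the paper's: enumerate the countably infinite set of isolated points $K\smallsetminus K'$, send them bijectively to $\omega$ and the unique limit point to $\omega$, and invoke the fact that a continuous bijection from a compact space to a Hausdorff space is a homeomorphism. The only cosmetic difference is that you build the map from $\omega+1$ onto $K$ (so you must verify explicitly that the enumeration converges to $x_0$, which you do correctly via compactness of $K\smallsetminus U$), whereas the paper maps $K$ onto $\omega+1$ and gets continuity essentially for free.
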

\begin{proof}
There is an $x\in \mathbb{R}$ such that  $K'=\{x\}$. The set 
$K\smallsetminus K'$ is infinite and countable. Therefore, there 
exists a bijective function $g$ of $K\smallsetminus K'$ onto $\omega$. 
Now, we define
\begin{equation*}
    \funcion{f}{K}{\omega+1}{z}{f(z)=\begin{cases}
    g(z),& \text{if }z\neq x,\\
    \omega,& \text{if }z= x.
    \end{cases}}
\end{equation*}
We see that $f$ is a bijective function.  Furthermore, since 
$\omega + 1$ is a compact topological space, $(\omega+1)' = \{\omega\}$, 
$f$ is an injective function, and $f(K') = f(\{x\}) = \{\omega\}$, 
we have that $f$ is a continuous function.  Moreover, 
since $\omega+1$ is a Hausdorff space, it follows that 
$f$ is in fact a homeomorphism.
\end{proof}
\begin{Lemma}\label{Lem3}
Let $\alpha$ be a countable ordinal number such that $\alpha>1$. Suppose 
that for all ordinal number $\beta$ such that $0<\beta<\alpha$ and for all 
$\widetilde{K}\in\mathcal{K}$ such that 
$\mathcal{CB}(\widetilde{K})=(\beta,p) 
\in \Omega \times (\omega \smallsetminus \{0\})$, there 
exists a homeomorphism $\widetilde{f}$ of $\widetilde{K}$ onto 
$\omega^\beta\cdot p+1$. Then, for all 
$K\in\mathcal{K}$ such that $\mathcal{CB}(K)=(\alpha,1)$, there 
exists a homeomorphism of $K$ onto $\omega^\alpha+1$.
\end{Lemma}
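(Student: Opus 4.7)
The plan is to write $K^{(\alpha)} = \{x\}$ and then exhibit both $K$ and $\omega^\alpha + 1$ as a disjoint union of a top point and countably many clopen pieces that shrink to that top point, with pieces whose homeomorphism type is covered by the induction hypothesis. Homeomorphisms between matching pieces, together with $x \mapsto \omega^\alpha$, will glue into a continuous bijection of compact Hausdorff spaces, hence a homeomorphism. The argument splits into the successor case $\alpha = \beta + 1$ (with $\beta \geq 1$ since $\alpha > 1$) and the limit case.

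In the successor case, the minimality of $\alpha$ forces $K^{(\beta)}$ to be infinite, and $(K^{(\beta)})' = K^{(\alpha)} = \{x\}$ gives $K^{(\beta)} = \{x\} \cup \{x_n : n \in \omega\}$ with $x_n \to x$. Using density of $\mathbb{R}\setminus K$, I would pick cut-points $c_n \in \mathbb{R}\setminus K$ between the $x_n$'s (handling the one- or two-sided accumulation to $x$ appropriately) and set $K_n := K \cap [c_n, c_{n-1}]$, which is clopen in $K$ because its endpoints avoid $K$. A short transfinite induction exploiting clopenness yields $K_n^{(\gamma)} = K^{(\gamma)} \cap K_n$ for every $\gamma$, so that $K_n \cap K^{(\beta)} = \{x_n\}$ and $x \notin K_n$ give $\mathcal{CB}(K_n) = (\beta, 1)$; the induction hypothesis then furnishes a homeomorphism $K_n \to \omega^\beta + 1$. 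Correspondingly, $\omega^{\beta+1}+1$ decomposes as $\{\omega^{\beta+1}\} \uplus \biguplus_n B_n$ into clopen pieces of characteristic $(\beta, 1)$, each homeomorphic to $\omega^\beta + 1$.

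In the limit case, I would decompose $K = \{x\} \uplus \biguplus_k K_k$ into clopen pieces by choosing cut-points in $\mathbb{R}\setminus K$ accumulating to $x$ from each side, absorbing any finite piece into an adjacent positive-rank one (permissible by Lemma~\ref{Lemtech0}) so that each $K_k$ has characteristic $(\beta_k, p_k)$ with $1 \leq \beta_k < \alpha$. Since $x \in K^{(\gamma)}$ for every $\gamma < \alpha$ while the $K_k$ shrink to $x$, one obtains $\sup_k \beta_k = \alpha$. After reordering so that $(\beta_k)$ is nondecreasing, the induction hypothesis yields $K_k$ homeomorphic to $\omega^{\beta_k} \cdot p_k + 1$. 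Setting $\delta_0 := 0$, $\delta_{k+1} := \delta_k + \omega^{\beta_k} \cdot p_k$, and $B_k := (\delta_k, \delta_{k+1}]$, ordinal absorption combined with $\sup_k \beta_k = \alpha$ gives $\sup_k \delta_k = \omega^\alpha$ and so a matching decomposition $\omega^\alpha + 1 = \{\omega^\alpha\} \uplus \biguplus_k B_k$ with each $B_k$ homeomorphic to $K_k$.

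To conclude, piecewise homeomorphisms $h_k \colon K_k \to B_k$ extended by $f(x) := \omega^\alpha$ form a bijection; continuity on each clopen $K_k$ is immediate, and continuity at $x$ holds because every basic neighborhood $(\mu, \omega^\alpha]$ of $\omega^\alpha$ contains all but finitely many $B_k$, while every neighborhood of $x$ in $K$ contains all but finitely many $K_k$. A continuous bijection from a compact space onto a Hausdorff space is automatically a homeomorphism. The main obstacle is the limit case: one must verify that the ranks of the clopen pieces of $K$ have supremum exactly $\alpha$, reindex so that the ordinal sum $\sum_k \omega^{\beta_k} p_k$ really recovers $\omega^\alpha$, and treat possible rank-$0$ pieces so that the induction hypothesis (which requires $\beta \geq 1$) applies uniformly on each piece.
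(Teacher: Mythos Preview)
Your overall strategy---decompose $K\smallsetminus\{x\}$ into clopen pieces shrinking to $x$, apply the hypothesis piecewise, and glue to a continuous bijection onto a successor ordinal---is exactly the paper's. The organisation differs, however. The paper does \emph{not} split on whether $\alpha$ is a successor or a limit when building the pieces: it picks any monotone sequence $(x_n)$ in $K'$ converging to $x$, chooses radii $r_n$ with $x\pm r_n\notin K$, and takes the annular pieces $K_k$; since each $K_k$ contains some $x_k\in K'$ one has $\beta_k\ge 1$ automatically, so no absorption of rank-$0$ pieces is needed. The successor/limit dichotomy appears only later, when the paper proves that $\tau:=\sum_k\omega^{\beta_k}p_k=\omega^\alpha$ by analysing $\widetilde\alpha:=\sup_k\beta_k$ (either $\widetilde\alpha+1=\alpha$ or $\widetilde\alpha=\alpha$). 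Your successor-case construction, cutting between consecutive points of $K^{(\beta)}$ so that every piece has characteristic exactly $(\beta,1)$, is tidier than the paper's, which must still compute the varying $(\beta_k,p_k)$.

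One genuine technical point to fix in your limit case: the step ``reorder so that $(\beta_k)$ is nondecreasing'' can fail. Nothing prevents some rank from occurring infinitely often while other ranks are cofinal in $\alpha$, and then no nondecreasing enumeration of the multiset $\{\beta_k\}$ exists. Fortunately the reordering is unnecessary: for \emph{any} ordering one has $\omega^{\beta_n}\le\sum_{k\le n}\omega^{\beta_k}p_k\le\tau$, hence $\omega^\alpha=\sup_k\omega^{\beta_k}\le\tau$, while each partial sum is $<\omega^\alpha$ since $\omega^\alpha$ is additively indecomposable; thus $\tau=\omega^\alpha$ outright. This is precisely the paper's argument in its case $\widetilde\alpha=\alpha$. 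With that correction your proposal goes through.
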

\begin{proof}
Let $K\in\mathcal{K}$ be such that $\mathcal{CB}(K)=(\alpha,1)$.  
Then, there exists an $x \in K$ such that  $K^{(\alpha)}=\{x\}$. We have 
that $x\in K^{(\alpha)}\subset K''$. Thus, $x$ is a limit point of $K'$. 
Hence, there exists a strictly increasing or strictly decreasing 
sequence $(x_n)_{n\in\omega}$ in $K' $ such that it converges to $x$.   
We suppose that $(x_n)_{n\in\omega}$ is an strictly increasing sequence 
in $K'$, the other case is similar. \\
We claim that for all $n\in\omega$, we can take $ r_n>0$ such that 
$x_n<x- r_n<x_{n+1}$ and $x- r_n, x+ r_n \notin K$. 
In fact, if we suppose the contrary, then there exists $l \in \omega$  
such that 
\begin{equation*}
   [x_l-x,x_{l+1}-x]\subset \{ r\in \mathbb{R}: 
   x- r\in K\text{ or }x+ r\in K\}.
\end{equation*}
However, the set on the right-hand side of the last inclusion is 
countable, which is a contradiction.  Hence, the claim is proved.  
We remark that the sequence $( r_n)_{n\in\omega}$ 
converges to 0 as $n$ goes to infinity.  We now define the sets
\begin{align}\label{eq:sets}
  K_0  &=K\cap \big((-\infty,x- r_0]\cup[x+ r_0,+\infty)\big),
	\nonumber \\
  K_{k}&=K\cap \big([x- r_{k-1},x- r_{k}]
	      \cup [x+ r_{k},x+ r_{k-1}]\big), 
				\; \; k \in \omega \smallsetminus \{0\}.
\end{align}
We see that for all $k \in \omega$, $x_k \in K_k$.  In addition, 
the sequence of sets $(K_k)_{k\in\omega}$ satisfies the 
following properties.
\begin{itemize}
 \item $K_k\subset K$, for all $k\in\omega$.
 \item $K_k\in \mathcal{K}$, for all $k\in\omega$, since they are countable 
       closed subsets of $K$.
 \item $x_k \in K_k'\neq\varnothing$, for all $k\in\omega$.  In fact, 
       let $\varepsilon >0$.  First, we consider the case 
			 $k\in\omega \smallsetminus \{0\}$.  We now take 
			 $\hat\varepsilon := \min \{\varepsilon, x_k-x +  r_{k-1}, 
			  x- r_k-x_k\}>0$.  Since $x_k \in K'$, there exists 
			 $z \in [(x_k-\hat\varepsilon, x_k+\hat\varepsilon) \smallsetminus \{x_k\}] \cap K$.  
			 Thus, $z \in [(x_k-\varepsilon, x_k+\varepsilon) \smallsetminus \{x_k\}] \cap K_k$.  
			 Hence, $x_k \in K_k'$.  For the case $k=0$, by taking  
			 $\hat\varepsilon := \min \{\varepsilon, x- r_0-x_0\}>0$, and proceeding in a 
			 similar way as in the previous case, we see that $x_0 \in K_0'$. 
 \item $(K_k)_{k\in\omega}$ is a pairwise disjoint sequence in $\mathcal{K}$.
 \item $\displaystyle \biguplus_{k\in\omega} K_k\uplus\{x\}=K$.  The fact that 
       $\displaystyle \biguplus_{k\in\omega} K_k\uplus\{x\} \subset K$ follows directly 
			 from \eqref{eq:sets}.  In order to prove the reverse inclusion, we take $z \in K$.  
			 If $z = x$, there is nothing else to show.  Now, we suppose that $z\neq x$.  Since 
			 $ r_n \to 0$ as $n \to +\infty$, we can choose the smallest natural number 
			 $N \in \omega$ such that $ r_N < |x-z|$.  Then, $z \in K_N$.
\end{itemize}
Moreover, from \eqref{eq:sets} we see that for all $k\in\omega$, 
$x\not\in K_k^{(\alpha)}\subset\{x\}$. Therefore, for all $k \in \omega$, 
$K_k^{(\alpha)}=\varnothing$. 
Thus, for all $k \in \omega$, 
$\mathcal{CB}(K_k)=(\beta_k,p_k) \in \Omega \times \omega$ implies 
that $0<\beta_k<\alpha$.  We remark that for all $k \in \omega$, 
$K_k \neq \varnothing$ implies that 
$p_k \in \omega \smallsetminus \{ 0\}$.  
 Using the hypothesis, we conclude that for all 
$k\in\omega$, there exists a homeomorphism $f_k$ of $K_k$ onto 
$\omega^{\beta_k}\cdot p_k+1$. We now define the function
\begin{equation*}
		\funcion{f}{K}{\tau+1}{z}
		{f(z)=\begin{cases}
		   f_0(z), & \text{if }z\in K_0,\\[2mm]
			 \displaystyle
		   \sum_{j=0}^{k-1}\omega^{\beta_j}\cdot p_j+1+f_k(z), 
			 & \text{if }z\in K_k, \; 
			 k \in \omega\smallsetminus\{0\}, \\[2mm]
			 \tau,  & \text{if }z=x,
		\end{cases}}
\end{equation*}
where
\begin{equation*}
  \tau:=\sum_{k\in\omega}\omega^{\beta_k}\cdot p_k
	:=\sup\left\{\sum_{k=0}^n\omega^{\beta_k}\cdot p_k:n  
	\in\omega\right\}.
\end{equation*}
\begin{enumerate}[label=(\alph*),leftmargin=*]
\item
First, we remark that $f$ is an injective function.  In fact, 
let $u, v \in K$ be such that $f(u)=f(v)$.  If $u=x$ and 
$v \in K_q$, for some $q \in \omega$, then 
$f(v) \le \sum_{k=0}^{q} \omega^{\beta_k} \cdot p_k < \tau =f(u)$, 
which is a contradiction.  Thus, there exists $r \in \omega$ 
such that $u \in K_r$.  We suppose, by contradiction, that 
$q \neq r$.  Without loss of generality, we may assume that $q < r$.  
Then, 
\begin{align*}
  f(v) &\le \sum_{k=0}^q \omega^{\beta_k} \cdot p_k
        \le \sum_{k=0}^{r-1} \omega^{\beta_k} \cdot p_k \\
			 &< \sum_{k=0}^{r-1} \omega^{\beta_k} \cdot p_k +1 
			    + f_r(u) 
			  = f(u),
\end{align*}
which is not possible.  Hence, $q=r$.  Thus, 
\begin{equation*}
  \sum_{k=0}^{q-1} \omega^{\beta_k} \cdot p_k +1+f_q(u) = f(u) 
	= f(v) = \sum_{k=0}^{q-1} \omega^{\beta_k} \cdot p_k +1+f_q(v),
\end{equation*}
implies that $f_q(u) = f_q(v)$.  Using the fact that $f_q$ is 
an injective function, it follows that $u=v$.
\item
We will now show that $f$ is onto.  In fact, let $\gamma \le \tau$.  
If $\gamma = \tau$, we have that $f(x) = \tau =\gamma$.  If 
$\gamma < \tau$, we take 
$M:= \min \{n \in \omega : \gamma \le 
\sum_{k=0}^n \omega^{\beta_k} \cdot p_k\}$.  
In case $M=0$, $\gamma \le \omega^{\beta_0} \cdot p_0$.  Since, $f_0$ 
is onto, there exists $z \in K_0 \subset K$ such that $f(z)= f_0(z)= \gamma$.  
We now assume that $M \in \omega \smallsetminus \{ 0 \}$.  Then, 
\begin{equation*}
 \sum_{k=0}^{M-1} \omega^{\beta_k} \cdot p_k +1 \le \gamma
 \le \sum_{k=0}^{M} \omega^{\beta_k} \cdot p_k.
\end{equation*}
Thus, there exists an ordinal number $\mu$ such that
\begin{equation*}
 \sum_{k=0}^{M-1} \omega^{\beta_k} \cdot p_k +1 + \mu = \gamma
 \le \sum_{k=0}^{M-1} \omega^{\beta_k} \cdot p_k 
 +\omega^{\beta_M} \cdot p_M.
\end{equation*} 
Then, $\mu \le \omega^{\beta_M} \cdot p_M$.  Since $f_M$ is onto, 
there exists $z \in K_M \subset K$ such that $f_M(z) = \mu$.  So, 
$f(z)= \sum_{k=0}^{M-1} \omega^{\beta_k} \cdot p_k +1+f_{M}(z)
=\gamma$. 
\item
Moreover, for all $k \in \omega$, $f|_{K_k}$ equals an ordinal number, i.e. 
a constant function,  plus a continuous function.  Thus, for 
all $k \in \omega$, $f|_{K_k}$ is a continuous function. In addition, 
since $(K_k)_{k\in\omega}$ is a pairwise disjoint sequence of open  
subsets in $K$, it follows that $f$ is a continuous function
at any element of $\biguplus_{k\in\omega} K_k$. Furthermore, $f$ is also 
continuous at the point $x \in K$.  If fact, let $\mu$ be an ordinal 
number such that $\mu < \tau$.  There exists $m \in \omega$ such that 
$\mu < \sum_{j=0}^m \omega^{\beta_j} \cdot p_j$.  We claim that 
\begin{equation}\label{eq:continuityx}
  f((x- r_m, x+ r_m) \cap K) \subset (\mu, \tau +1).
\end{equation}
Let $y \in (x- r_m, x+ r_m) \cap K$.  If $y=x$, then 
$f(y)=f(x)=\tau \in (\mu, \tau +1)$.  We now suppose that 
$y \neq x$.  Then, there is $i \in \omega$ such that 
$y \in K_i$. Since $( r_n)_{n \in \omega}$ is a 
strictly decreasing sequence of positive numbers, we conclude 
that $i > m$.  Then, 
\begin{equation}\label{eq:cont1}
  f(y) = \sum_{j=0}^{i-1} \omega^{\beta_j} \cdot p_j + 1 + f_i(y) 
	     \ge \sum_{j=0}^{m} \omega^{\beta_j} \cdot p_j 
			 > \mu.
\end{equation} 
Moreover,
\begin{align}\label{eq:cont2}
  f(y) & = \sum_{j=0}^{i-1} \omega^{\beta_j} \cdot p_j + 1 + f_i(y) 
	       \le \sum_{j=0}^{i-1} \omega^{\beta_j} \cdot p_j + 1 
			     + \omega^{\beta_i} \cdot p_i  \nonumber \\
			 & = \sum_{j=0}^{i} \omega^{\beta_j} \cdot p_j \le \tau 
			   < \tau +1.
\end{align}
From \eqref{eq:cont1} and \eqref{eq:cont2}, we see that 
$f(y) \in (\mu, \tau+1)$.  Thus, \eqref{eq:continuityx} 
follows.  Hence, $f$ is continuous at the point $x$. 
\end{enumerate}
By (a) and (b), $f$ is a bijective function.  In addition, by (c),
$f$ is a continuous function of $K$ onto $\tau+1$.  \\
We will now prove that $\tau=\omega^\alpha$. In order to get  
this, let  
$\widetilde{\alpha}:=\sup\{\beta_k:k\in\omega\} \in \mathbf{OR}$. 
We see that $\widetilde{\alpha}\leq \alpha$. 
\begin{enumerate}[label= (\roman*)]
\item
First, we consider the case $\widetilde{\alpha}< \alpha$.  Then,  
$\widetilde{\alpha} +1 \le \alpha$.  Thus, for all $k \in \omega$, 
$K_k^{(\widetilde{\alpha}+1)}=\varnothing$.  Using 
Transfinite Induction, and proceeding as in the proof 
of \eqref{eq:03}, we get   
\begin{equation*}
  K^{(\widetilde{\alpha}+1)} 
	= \biguplus_{k \in \omega} K_k^{(\widetilde{\alpha}+1)}
	  \uplus \{x\} = \{x\}.
\end{equation*}
Then, $\widetilde{\alpha}+1 = \alpha$. Since for all 
$k \in \omega$, 
$\omega^{\beta_k} \cdot p_k \le \omega^{\widetilde{\alpha}} \cdot p_k$, 
we see that 
\begin{equation}\label{eq:tau1}
  \tau = \sum_{k \in \omega}\omega^{\beta_k} \cdot p_k 
  \le  \omega^{\widetilde{\alpha}}  \cdot
	\left(\sum_{k \in \omega} p_k\right)
  = \omega^{\widetilde{\alpha}} \cdot \omega 
  = \omega^{\widetilde{\alpha}+1} = \omega^{\alpha}.
\end{equation}	
On the other hand, we claim that  
\begin{equation}\label{eq:cardset}
   |\{ n \in \omega : \beta_n = \widetilde{\alpha} \}| 
	 = \aleph_0.
\end{equation}	
In order to prove \eqref{eq:cardset}, we first suppose, by 
contradiction, that for all $n \in \omega$, 
$\beta_n < \widetilde{\alpha}$.  Thus, for all $n \in \omega$, 
$\beta_n+1 \le \widetilde{\alpha}$, and we get 
$K_n^{(\widetilde{\alpha})} \subset K_n^{(\beta_n+1)} = \varnothing$.  
Moreover, we see that 
$K^{(\widetilde{\alpha})} = \biguplus_{k \in \omega} 
K_k^{(\widetilde{\alpha})} \uplus \{x\} = \{x\}$.  Then, 
$\widetilde{\alpha} = \alpha$, which is a contradiction.  Hence, 
there exists at least one $n\in \omega$ such that 
$\beta_n = \widetilde{\alpha}$.  We now suppose, again by contradiction, 
that the set 
$\{ n \in \omega : \beta_n = \widetilde{\alpha} \} \neq \varnothing$
is finite.  Let 
$N := \max \{n \in \omega : \beta_n = \widetilde{\alpha} \} \in \omega$. 
We have that for all $k \in \omega$ such that $k>N$, 
$\beta_k < \widetilde{\alpha}$.  Then,
\begin{equation*}
  K^{(\widetilde{\alpha})} = \biguplus_{k \in \omega} 
  K_k^{(\widetilde{\alpha})} \uplus \{x\} 
	= \biguplus_{k=0}^N 
  K_k^{(\widetilde{\alpha})} \uplus \{x\}. 
\end{equation*}
It follows that,  $K^{(\widetilde{\alpha})}$ is a finite set.  Hence, 
$K^{(\alpha)} = K^{(\widetilde{\alpha} +1)} = \varnothing$, 
which is a contradiction with the fact that 
$K^{(\alpha)} = \{x\}$.  Therefore, \eqref{eq:cardset} is proved.  
We now define, for all $n \in \omega$, 
\begin{equation*}
  m_n := |\{k \in \omega : k \le n \text{ and } 
	\beta_k = \widetilde{\alpha}  \}| \in \omega.
\end{equation*}
Then, for all $n \in \omega$, we have that
\begin{equation*}
  \sum_{k=0}^n \omega^{\beta_k} \cdot p_k 
	\ge \omega^{\widetilde{\alpha}} \cdot m_n.
\end{equation*}
For this reason,
\begin{align}\label{eq:tau2}
	\tau & = \sum_{k\in\omega}\omega^{\beta_k}\cdot p_k 
	       \ge \omega^{\widetilde{\alpha}} 
	           \cdot \sup \{m_n : n\in \omega \} \nonumber \\
			 & = \omega^{\widetilde{\alpha}} \cdot \omega
			   = \omega^{\widetilde{\alpha}+1}
         = \omega^\alpha.
\end{align}
Using \eqref{eq:tau1} and \eqref{eq:tau2}, we conclude that 
$\tau = \omega^\alpha$.
\item
We now consider the case $\widetilde{\alpha} = \alpha$. We claim that 
for all $k \in \omega$, $\beta_k < \widetilde{\alpha}$.  In fact, 
if there exists $l \in \omega$ such that $\beta_l = \widetilde{\alpha}$, 
then 
\begin{equation*}
  K_l^{(\beta_l)} \uplus \{x\} \subset 
	\biguplus_{i\in \omega} K_i^{(\beta_l)} \uplus \{x\} 
	= K^{(\beta_l)} = K^{(\alpha)} = \{x\},
\end{equation*}
contradicting the fact that $|K_l^{(\beta_l)}|=p_l>0$. We now 
remark that $\alpha$ is a limit ordinal.  In order to prove 
the last assertion, we suppose, for the sake of contradiction, 
that $\alpha$ is a successor ordinal. Then, there exists an ordinal 
number $\lambda$ such that $\alpha = \lambda +1$.  Thus, for all $k \in \omega$,
$\beta_k \le \lambda < \alpha= \widetilde{\alpha}$, which is a 
contradiction with the definition of $\widetilde{\alpha}$.  
On the other hand, since for all $k \in \omega$, 
$\omega^{\beta_k} \le \omega^{\beta_k} \cdot p_k \le \tau$, it 
follows that 
\begin{equation}\label{eq:tau3}
  \omega^{\alpha} = \omega^{\widetilde{\alpha}} 
	=\sup \{\omega^{\beta_k} : k \in \omega \} \le \tau.
\end{equation}
We now define, for all $n \in \omega$,
\begin{align*}
  \beta_{k_n} & := \max \{\beta_k : k=0,1,\ldots,n \}, \\
	p_{k_n}     & := \max \{ p_k : k=0,1,\ldots,n \}.
\end{align*}
Then, for all $n \in \omega$, we see that 
\begin{equation*}
  \sum_{k=0}^n \omega^{\beta_k} \cdot p_k 
	\le \omega^{\beta_{k_n}} \cdot p_{k_n} \cdot  n 
	< \omega^{\beta_{k_n}+1} \le \omega^\alpha,
\end{equation*}
where in the last inequality we have used the fact 
that $\beta_{k_n}<\beta_{k_n}+1 \le \alpha$.  In 
consequence,
\begin{equation}\label{eq:tau4}
   \tau = \sum_{k\in\omega}\omega^{\beta_k}\cdot p_k
	 \le \omega^\alpha.
\end{equation}
Equations \eqref{eq:tau3} and  \eqref{eq:tau4} imply that
$\tau = \omega^\alpha$.
\end{enumerate}
Therefore, $f$ is a bijective and continuous function 
of $K$ onto $\tau+1=\omega^{\alpha}+1$. 
In addition, since $\omega^{\alpha}+1$ is a Hausdorff space,
we conclude that $f$ is a homeomorphism of $K$ 
onto $\omega^\alpha+1$.
\end{proof}
\begin{Lemma}\label{Lemtech}
Suppose that $K$ and $F$ are closed subsets of $\mathbb{R}$ such 
that $K \cap F$ = $K \cap \accentset{\circ}{F}$, where 
$\accentset{\circ}{F}$ is the set of all interior points of $F$.  
Then, for all $\alpha \in \mathbf{OR}$, we have that 
\begin{equation}\label{eq:lemtech}
  (K \cap F)^{(\alpha)} = K^{(\alpha)} \cap F.
\end{equation} 
\end{Lemma}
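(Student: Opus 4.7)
The plan is to proceed by Transfinite Induction on $\alpha$, using the hypothesis $K\cap F = K \cap \accentset{\circ}{F}$ only at the successor step. Throughout I will exploit the fact that $K^{(\alpha)} \subset K$, so $K^{(\alpha)} \cap F \subset K \cap F = K \cap \accentset{\circ}{F}$ for every ordinal $\alpha$; that is, every point of $K^{(\alpha)} \cap F$ lies in the topological interior of $F$.

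The base case $\alpha = 0$ is immediate from the definition. For the successor step, assume \eqref{eq:lemtech} holds for $\alpha$ and compute
\begin{equation*}
  (K\cap F)^{(\alpha+1)} = \bigl((K\cap F)^{(\alpha)}\bigr)' = (K^{(\alpha)} \cap F)'.
\end{equation*}
The inclusion $(K^{(\alpha)} \cap F)' \subset K^{(\alpha+1)} \cap F$ is easy: a limit point of $K^{(\alpha)} \cap F$ is certainly a limit point of $K^{(\alpha)}$, and it lies in $F$ because $F$ is closed. For the reverse inclusion, take $x \in K^{(\alpha+1)} \cap F$. Since $K^{(\alpha+1)} \subset K$, we have $x \in K\cap F = K \cap \accentset{\circ}{F}$, so there exists $\epsilon > 0$ with $(x-\epsilon, x+\epsilon) \subset F$. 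Because $x \in (K^{(\alpha)})'$, every neighborhood of $x$ contains a point of $K^{(\alpha)}$ different from $x$; intersecting with $(x-\epsilon, x+\epsilon)$ produces points in $K^{(\alpha)} \cap F$ different from $x$ arbitrarily close to $x$, so $x \in (K^{(\alpha)}\cap F)'$.

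For a nonzero limit ordinal $\lambda$, assume \eqref{eq:lemtech} for every $\beta < \lambda$; then a direct computation using the definition of the $\lambda$-th derivative gives
\begin{equation*}
  (K\cap F)^{(\lambda)} = \bigcap_{\beta<\lambda} (K\cap F)^{(\beta)} = \bigcap_{\beta<\lambda} (K^{(\beta)} \cap F) = \left(\bigcap_{\beta<\lambda} K^{(\beta)}\right) \cap F = K^{(\lambda)} \cap F.
\end{equation*}

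The only nontrivial step is the reverse inclusion in the successor case; that is where the interior hypothesis on $F$ does real work, ensuring that a limit point $x$ of $K^{(\alpha)}$ lying in $F$ can be approached from inside $F$ (and hence from inside $K^{(\alpha)} \cap F$) rather than only from the boundary of $F$. The remaining steps are formal manipulations of derived sets and intersections.
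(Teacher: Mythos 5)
Your proof is correct and follows essentially the same route as the paper: transfinite induction, with the interior hypothesis $K\cap F = K\cap\accentset{\circ}{F}$ invoked exactly at the reverse inclusion of the successor step to shrink a punctured neighborhood of $x$ into $F$, and the forward inclusion and limit step handled by the same formal manipulations. No gaps.
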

\begin{proof}
We proceed by Transfinite Induction.  
\begin{itemize}[leftmargin=*]
\item
The case $\alpha = 0$ is immediate. 
\item
We now suppose that the result is true for $\alpha \in \mathbf{OR}$.  Then, 
\begin{equation*}
  (K \cap F)^{(\alpha +1)} = \left( (K \cap F)^{(\alpha)} \right)'
	                         = ( K^{(\alpha)} \cap F )'
													 \subset ( K^{(\alpha)} )' \cap F'
													 \subset K^{(\alpha +1)} \cap F,
\end{equation*}
where in the last expression we have used the induction hypothesis and 
the fact that $F$ is closed.  In order to prove the reverse inclusion, 
let $x \in K^{(\alpha +1)} \cap F$.  Since $K$ is closed, 
$x \in K \cap F = K \cap \accentset{\circ}{F}$.  Thus, there exists $ r>0$ such 
that $(x- r, x+ r) \subset F$.  Let $\varepsilon >0$.  We now take 
$\tilde{\varepsilon}:= \min \{ \varepsilon,  r\} >0$.  Then,
\begin{align*}
  \varnothing & \neq \big( (x-\tilde{\varepsilon}, x+\tilde{\varepsilon}) 
	              \smallsetminus \{ x\} \big) \cap K^{(\alpha)}
							 =\big( (x-\tilde{\varepsilon}, x+\tilde{\varepsilon}) 
							  \smallsetminus \{ x\} \big) \cap K^{(\alpha)} \cap F \\
							& \subset \big( (x- \varepsilon, x+\varepsilon) 
							  \smallsetminus \{ x\} \big) \cap (K \cap F)^{(\alpha)}.
 \end{align*}
Hence, $x \in (K \cap F)^{(\alpha +1)}$.  Therefore, 
$(K \cap F)^{(\alpha +1)} = K^{(\alpha +1)} \cap F$.
\item
Finally, let $\lambda \neq 0$ be a limit ordinal number.  We suppose that 
for all $\beta \in \mathbf{OR}$ such that $\beta < \lambda$, 
$(K \cap F)^{(\beta)} = K^{(\beta)} \cap F$.  Then, 
\begin{equation*}
  (K \cap F)^{(\lambda)} = \bigcap_{\beta < \lambda} (K \cap F)^{(\beta)}
	                       = \bigcap_{\beta < \lambda} (K^{(\beta)} \cap F)
												 = \bigcap_{\beta < \lambda} K^{(\beta)} \cap F 
												 = K^{(\lambda)} \cap F.
\end{equation*}
\end{itemize}
This concludes the proof.
\end{proof}
\begin{Lemma}\label{Lem4}
Let $\alpha$ be a countable ordinal number such that $\alpha>0$.  
Let $p\in\omega \smallsetminus \{0\}$.  Suppose that
for all $\widetilde{K}\in\mathcal{K}$ such that 
$\mathcal{CB}(\widetilde{K})=(\alpha,1)$, there exists a 
homeomorphism of $\widetilde{K}$ onto $\omega^\alpha+1$. 
Then, for all $K\in\mathcal{K}$ such that 
$\mathcal{CB}(K)=(\alpha,p)$, there exists a homeomorphism 
of $K$ onto $\omega^\alpha\cdot p+1$.
\end{Lemma}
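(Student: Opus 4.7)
The plan is to decompose $K$ into $p$ pairwise disjoint compact countable pieces, each having Cantor--Bendixson characteristic $(\alpha,1)$, then apply the hypothesis to each piece and glue the resulting homeomorphisms together with suitable ordinal shifts.

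Write $K^{(\alpha)}=\{y_1,\ldots,y_p\}$ with $y_1<\cdots<y_p$. Since $K$ is countable, each open interval $(y_k,y_{k+1})$ contains a point $r_k\notin K$, for $k=1,\ldots,p-1$; also pick $r_0<y_1$ and $r_p>y_p$ with $r_0,r_p\notin K$. Define $K_k:=K\cap[r_{k-1},r_k]$ for $k=1,\ldots,p$. Each $K_k$ is a closed, bounded, countable subset of $\mathbb{R}$, hence an element of $\mathcal{K}$; the $K_k$ are pairwise disjoint and $K=\biguplus_{k=1}^{p}K_k$. By Lemma~\ref{Lemtech0}, $K^{(\alpha)}=\biguplus_{k=1}^{p}K_k^{(\alpha)}$; since $K_k^{(\alpha)}\subset[r_{k-1},r_k]$ and among $y_1,\ldots,y_p$ only $y_k$ lies in $[r_{k-1},r_k]$, we get $K_k^{(\alpha)}=\{y_k\}$. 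To conclude that $\mathcal{CB}(K_k)=(\alpha,1)$, write $\mathcal{CB}(K_k)=(\gamma_k,q_k)$; then $\gamma_k\leq\alpha$, and if $\gamma_k<\alpha$, $K_k^{(\gamma_k)}$ is finite, so $K_k^{(\gamma_k+1)}=\varnothing\supset K_k^{(\alpha)}$, contradicting $K_k^{(\alpha)}=\{y_k\}$. Hence $\gamma_k=\alpha$ and $q_k=1$.

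By hypothesis, for each $k$ there exists a homeomorphism $f_k\colon K_k\longmapsto\omega^\alpha+1$; since $(\omega^\alpha+1)^{(\alpha)}=\{\omega^\alpha\}$ and homeomorphisms preserve the $\alpha$-th derivative, necessarily $f_k(y_k)=\omega^\alpha$. Now define
\begin{equation*}
f(z):=\begin{cases}f_1(z), & z\in K_1,\\ \omega^\alpha\cdot(k-1)+1+f_k(z), & z\in K_k,\ k\in\{2,\ldots,p\}.\end{cases}
\end{equation*}
Since $\alpha\geq 1$, we have $1+\gamma=\gamma$ for every $\gamma\geq\omega$, and in particular $\omega^\alpha\cdot(k-1)+1+\omega^\alpha=\omega^\alpha\cdot k$. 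Thus $\gamma\mapsto\omega^\alpha\cdot(k-1)+1+\gamma$ is an order-preserving bijection of $[0,\omega^\alpha]$ onto $(\omega^\alpha\cdot(k-1),\omega^\alpha\cdot k]$, so $f|_{K_k}$ maps $K_k$ bijectively onto this interval. Together with $f(K_1)=[0,\omega^\alpha]$, these disjoint images partition $\omega^\alpha\cdot p+1$, so $f$ is a bijection of $K$ onto $\omega^\alpha\cdot p+1$.

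For continuity, each $f|_{K_k}$ is continuous as a left-translate by a fixed ordinal of the continuous map $f_k$; and since $r_{k-1},r_k\notin K$, a small $\varepsilon>0$ gives $K_k=K\cap(r_{k-1}-\varepsilon,r_k+\varepsilon)$, so every $K_k$ is clopen in $K$. Hence $f$ is continuous on the whole compact space $K$, and as $\omega^\alpha\cdot p+1$ is Hausdorff, $f$ is a homeomorphism. The main subtle point is the gluing step: the shift by $\omega^\alpha\cdot(k-1)+1$ (rather than merely $\omega^\alpha\cdot(k-1)$) is needed to avoid the collision $f(y_{k-1})=\omega^\alpha\cdot(k-1)=f(\min K_k)$, and surjectivity is still obtained thanks to the identity $1+\omega^\alpha=\omega^\alpha$.
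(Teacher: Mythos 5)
Your proof is correct and follows essentially the same route as the paper: cut $K$ at points $z_k\notin K$ separating the elements of $K^{(\alpha)}$, show each piece has characteristic $(\alpha,1)$, and glue the homeomorphisms with the shifts $\omega^\alpha\cdot(k-1)+1$. The only cosmetic difference is that you identify $K_k^{(\alpha)}=\{y_k\}$ via Lemma~\ref{Lemtech0} rather than Lemma~\ref{Lemtech}, and you write out the bijectivity and continuity checks that the paper delegates to the corresponding steps of Lemma~\ref{Lem3}; both work.
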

\begin{proof}
Let $K\in\mathcal{K}$ be such that 
$\mathcal{CB}(K)=(\alpha,p) \in  \Omega \times \omega$. We write 
$K^{(\alpha)}=\{x_1,x_2,\ldots,x_p\}$, where $x_i < x_j$, for 
all $i, j \in I:= \{1, \ldots, p\}$ with $i < j$. We see that 
for all $k \in \{1,\ldots,p-1\}$, there exists  
$z_k \in (x_k, x_{k+1})$ such that $z_k \notin K$. We now consider 
the sets
\begin{align}\label{eq:sets1}
  K_1  &= K\cap (-\infty,z_1], \nonumber\\
  K_{k}&= K\cap [z_{k-1},z_{k}], 
				  \; \; k \in \{2, \ldots, p-1\}, \nonumber \\
	K_p  &= K\cap [z_{p-1}, +\infty).			
\end{align}
Proceeding as in the proof of Lemma \ref{Lem3}, 
it is possible to show that the finite family $(K_k)_{k\in I}$
satisfies the following properties:
\begin{itemize}
  \item $K_k\subset K$, for all $k\in I$.
  \item $K_k\in \mathcal{K}$, for all $k\in I$.
  \item $x_k\in K_k' \neq \varnothing$, for all $k\in I$.
  \item $(K_k)_{k\in I}$ is a pairwise disjoint finite 
	      sequence in $\mathcal{K}$.
  \item $\displaystyle \biguplus_{k\in I} K_k=K$.
\end{itemize}
By using Lemma \ref{Lemtech}, we have that for all $k\in I$, 
$K_k^{(\alpha)}=\{x_k\}$. Therefore, for all $k\in I$, 
$\mathcal{CB}(K_k)=(\alpha,1)$. Thus, for all $k \in I$, there 
exists a homeomorphism $f_k$ of $K_k$ onto $\omega^\alpha+1$. We now 
define the function $f$ given by
\begin{equation*}
    \funcion{f}{K}{\tau+1}{z}
    {f(z)= 
		\begin{cases} 
		f_1(z), & \text{if } z \in K_1, \\[2mm]
		\displaystyle \sum_{j=1}^{k-1}\omega^{\alpha}+1+f_k(z), &  
    \text{if }z\in K_k, \text{ for some } k \in I \smallsetminus \{1\},
		\end{cases}}
\end{equation*}
where
\begin{equation*}
    \tau:=\sum_{j=1}^{p}\omega^{\alpha}=\omega^{\alpha} \cdot 
		\sum_{j=1}^{p} 1=\omega^{\alpha}\cdot p.
\end{equation*}
Proceeding in a similar fashion as in the items (a), (b) and (c) in the 
proof of Lemma \ref{Lem3}, we obtain that $f$ is a homeomorphism of 
$K$ onto $\omega^\alpha\cdot p+1$.
\end{proof}
\begin{Lemma}\label{Lem5}
Let $\alpha$ be a countable ordinal number such that $\alpha>0$.  
Let $p\in\omega \smallsetminus \{0\}$.  Then, for all $K\in\mathcal{K}$ 
such that $\mathcal{CB}(K)=(\alpha,p)$, there exists a homeomorphism 
of $K$ onto $\omega^\alpha\cdot p+1$.
\end{Lemma}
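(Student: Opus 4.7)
The plan is to derive this statement as a direct consequence of the three preceding lemmas by transfinite induction on the ordinal $\alpha$, with the parameter $p$ handled at each stage by Lemma~\ref{Lem4}. Since Lemma~\ref{Lem3} only provides an inductive step for $\alpha > 1$, the case $\alpha = 1$ must be extracted separately from Lemma~\ref{Lem2}.

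First I would establish the base case $\alpha = 1$. Given $K \in \mathcal{K}$ with $\mathcal{CB}(K) = (1,1)$, Lemma~\ref{Lem2} produces a homeomorphism of $K$ onto $\omega + 1 = \omega^{1} + 1$. Hence the hypothesis of Lemma~\ref{Lem4} is satisfied for $\alpha = 1$, and applying that lemma for each $p \in \omega \smallsetminus \{0\}$ yields a homeomorphism of any $K \in \mathcal{K}$ with $\mathcal{CB}(K) = (1,p)$ onto $\omega^{1} \cdot p + 1$.

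For the inductive step, fix a countable ordinal $\alpha > 1$ and assume that for every ordinal $\beta$ with $0 < \beta < \alpha$ and every $q \in \omega \smallsetminus \{0\}$, every $\widetilde{K} \in \mathcal{K}$ with $\mathcal{CB}(\widetilde{K}) = (\beta, q)$ is homeomorphic to $\omega^{\beta} \cdot q + 1$. This is precisely the hypothesis of Lemma~\ref{Lem3}, so that lemma delivers, for any $K \in \mathcal{K}$ with $\mathcal{CB}(K) = (\alpha, 1)$, a homeomorphism of $K$ onto $\omega^{\alpha} + 1$. This is in turn exactly the hypothesis of Lemma~\ref{Lem4} for the ordinal $\alpha$, and so for every $p \in \omega \smallsetminus \{0\}$ and every $K \in \mathcal{K}$ with $\mathcal{CB}(K) = (\alpha, p)$, Lemma~\ref{Lem4} supplies a homeomorphism of $K$ onto $\omega^{\alpha} \cdot p + 1$. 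The same argument works verbatim whether $\alpha$ is a successor ordinal or a countable limit ordinal, since both Lemma~\ref{Lem3} and Lemma~\ref{Lem4} are insensitive to this distinction.

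There is really no substantive obstacle here: all of the genuine work (the decomposition of $K$ around an accumulation point, the construction of the ordinal-valued map, and the verification that $\tau = \omega^{\alpha}$) has already been carried out in Lemmas~\ref{Lem2}, \ref{Lem3} and \ref{Lem4}. The only point requiring any attention is bookkeeping the induction: the case $\alpha = 1$ is not covered by Lemma~\ref{Lem3}, so it must be handled via Lemma~\ref{Lem2}, and at each ordinal stage one must first invoke Lemma~\ref{Lem3} (giving the case $p = 1$) before Lemma~\ref{Lem4} can be applied to produce the general case.
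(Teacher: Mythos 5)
Your proposal is correct and is essentially identical to the paper's own proof: a strong transfinite induction in which Lemmas~\ref{Lem2} and~\ref{Lem4} settle the base case $\alpha=1$, and Lemmas~\ref{Lem3} and~\ref{Lem4} carry out the inductive step for $\alpha>1$. The paper states this in three sentences; your version merely spells out the same bookkeeping more explicitly.
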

\begin{proof}
We will use Strong Transfinite Induction.  By Lemmas~\ref{Lem2} 
and~\ref{Lem4}, the result holds for $\alpha=1$. We now consider 
$\alpha \in \Omega$ such that $\alpha>1$, and we suppose that 
the result is true for all ordinal number $\beta$ such that 
$0<\beta<\alpha$. Lemmas~\ref{Lem3} and~\ref{Lem4} imply the 
result for $\alpha$. Hence, the lemma is proved.
\end{proof}
Next result contains the reciprocal of Theorem~\ref{Th2}. 
\begin{Theorem}\label{Th3}
If $K_1,K_2\in \mathcal{K}$ and $\mathcal{CB}(K_1)=\mathcal{CB}(K_2)$, 
then $K_1\sim K_2$.
\end{Theorem}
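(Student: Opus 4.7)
The plan is to reduce the theorem to a canonical-representative statement via the Cantor-Bendixson characteristic: if every nonempty $K\in\mathcal{K}$ with $\mathcal{CB}(K)=(\alpha,p)$ and $\alpha\geq 1$ is homeomorphic to the same ordinal space $\omega^\alpha\cdot p+1$ (endowed with the order topology), then two elements sharing the same characteristic are each homeomorphic to a common target and therefore homeomorphic to each other. Since Lemma \ref{Lem5} is precisely this canonical-form statement, the theorem should follow by composition.

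I would first dispatch the degenerate cases. If $\mathcal{CB}(K_1)=\mathcal{CB}(K_2)=(\alpha,0)$, Proposition \ref{Prop1} gives $K_1=K_2=\varnothing$, and the identity serves as a homeomorphism. If $\alpha=0$ and $p\geq 1$, then $K_1$ and $K_2$ are finite subsets of $\mathbb{R}$ of the same cardinality; since every finite subset of $\mathbb{R}$ carries the discrete topology, any bijection $K_1\to K_2$ is continuous and hence a homeomorphism.

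For the principal case $\alpha\geq 1$ and $p\geq 1$, I would invoke Lemma \ref{Lem5} twice to obtain homeomorphisms
\begin{equation*}
   f_1\colon K_1\longrightarrow \omega^{\alpha}\cdot p+1,\qquad
   f_2\colon K_2\longrightarrow \omega^{\alpha}\cdot p+1.
\end{equation*}
Then $f_2^{-1}\circ f_1\colon K_1\to K_2$ is a homeomorphism (composition of homeomorphisms, using that $f_2$ is a homeomorphism between compact Hausdorff spaces and hence its inverse is continuous), so $K_1\sim K_2$ by \eqref{relation2}.

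There is no genuine obstacle left at this stage: the whole technical difficulty of the equivalence has been absorbed into Lemma \ref{Lem5}, whose proof already performs the transfinite construction that matches arbitrary $K\in\mathcal{K}$ with the standard ordinal model. The present theorem is therefore only a short combination of Proposition \ref{Prop1}, Lemma \ref{Lem5}, and the trivial observation about finite subsets of $\mathbb{R}$.
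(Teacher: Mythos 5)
Your proposal is correct and follows essentially the same route as the paper: dispose of the $\alpha=0$ and empty cases directly, then for $\alpha\geq 1$ apply Lemma \ref{Lem5} (with Proposition \ref{Prop1} guaranteeing $p\geq 1$) to map both sets homeomorphically onto $\omega^{\alpha}\cdot p+1$ and compose. The only difference is a slightly finer case split at the start, which changes nothing of substance.
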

\begin{proof}
If $\mathcal{CB}(K_1)=\mathcal{CB}(K_2)=(0,p) \in \Omega \times \omega$, 
we get $|K_1|=|K_2|=p$.  Then, $K_1\sim K_2$. \\
We now suppose that $\mathcal{CB}(K_1)=\mathcal{CB}(K_2)=(\alpha,p)$, 
with $\alpha>0$.  By Proposition~\ref{Prop1}, 
$p \in \omega \smallsetminus \{0\}$. By Lemma~\ref{Lem5}, there exist two  
homeomorphisms, $g$ of $K_1$ onto $\omega^\alpha\cdot p+1$ 
and $h$ of $K_2$ onto $\omega^\alpha\cdot p+1$. Therefore,
$ {f=h^{-1}\circ g}\colon{K_1}\longmapsto{K_2}$ is a homeomorphism of 
$K_1$ onto $K_2$. Hence, $K_1 \sim K_2$.
\end{proof}
Theorems~\ref{Th2} and \ref{Th3} fully characterize the partition of 
$\mathcal{K}$ by the Cantor-Bendixson characteristic.
\subsection{Cardinality of the set $\mathscr{K}$}
Combining the previous results we obtain the cardinality of $\mathscr{K}$.
\begin{Theorem}\label{Thcardinality}
The set $\mathscr{K}$, given by \eqref{partition}, has cardinality $\aleph_1$.
\end{Theorem}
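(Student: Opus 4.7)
The plan is to show the two inequalities $|\mathscr{K}| \le \aleph_1$ and $|\mathscr{K}| \ge \aleph_1$ and combine them. Both directions are essentially immediate consequences of the results already established, once one notices that $\mathcal{CB}$ descends to a well-defined map on equivalence classes.

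First, by Theorem~\ref{Th2}, the map $K \mapsto \mathcal{CB}(K)$ is constant on each equivalence class, so it induces a well-defined function
\begin{equation*}
\widetilde{\mathcal{CB}}\colon \mathscr{K} \longrightarrow \Omega\times\omega,
\quad [K]\longmapsto \mathcal{CB}(K),
\end{equation*}
and by Theorem~\ref{Th3} this function is injective. Hence $|\mathscr{K}| \le |\Omega\times\omega| = \aleph_1\cdot\aleph_0 = \aleph_1$. This gives the upper bound with essentially no work.

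For the lower bound, I would exhibit $\aleph_1$ pairwise inequivalent elements of $\mathcal{K}$. Theorem~\ref{Th1}, applied (say) on the interval $(0,1]$, produces for every countable ordinal $\alpha\in\Omega$ a set $K_\alpha\in\mathcal{K}$ with $K_\alpha^{(\alpha)}=\{1\}$. Since $K_\alpha^{(\alpha)}$ is a single point and hence finite, while $K_\alpha^{(\beta)}\supseteq K_\alpha^{(\alpha)}\neq\varnothing$ for all $\beta\le\alpha$, the minimality in the definition of the Cantor-Bendixson characteristic gives $\mathcal{CB}(K_\alpha)=(\gamma_\alpha,p_\alpha)$ with $\gamma_\alpha\le\alpha$; moreover, since $K_\alpha^{(\alpha)}=\{1\}$ has exactly one element, we have $\gamma_\alpha=\alpha$ and $p_\alpha=1$, so $\mathcal{CB}(K_\alpha)=(\alpha,1)$. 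Hence $\alpha\mapsto[K_\alpha]$ is an injection $\Omega\hookrightarrow\mathscr{K}$ (distinct $\alpha$'s give distinct characteristics, hence distinct classes by Theorem~\ref{Th2}), which yields $|\mathscr{K}|\ge|\Omega|=\aleph_1$.

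Combining both inequalities via the Cantor-Schr\"oder-Bernstein theorem yields $|\mathscr{K}|=\aleph_1$. I do not anticipate a genuine obstacle: the nontrivial content has already been packaged into Theorems~\ref{Th1}, \ref{Th2} and \ref{Th3}. The only minor care needed is the justification that the sets $K_\alpha$ produced by Theorem~\ref{Th1} really have characteristic $(\alpha,1)$ and not something smaller; this is handled by observing that all prior derivatives contain $K_\alpha^{(\alpha)}=\{1\}$ together with other points coming from the recursive construction, and are therefore infinite.
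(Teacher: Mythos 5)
Your proof is correct and follows essentially the same route as the paper, which defines the induced map $\widetilde{\mathcal{CB}}$ on $\mathscr{K}$ and shows it is a bijection onto $\big(\Omega\times(\omega\smallsetminus\{0\})\big)\cup\{(0,0)\}$ using Theorem~\ref{Th2}, Corollary~\ref{Cor1} and Theorem~\ref{Th3}; your two-inequality version with Cantor--Schr\"oder--Bernstein is only a cosmetic repackaging. The one step worth tightening is the verification that $\mathcal{CB}(K_\alpha)=(\alpha,1)$: the clean justification is that if $K_\alpha^{(\beta)}$ were finite for some $\beta<\alpha$, then $K_\alpha^{(\beta+1)}=\varnothing$ and hence $K_\alpha^{(\alpha)}=\varnothing$, contradicting $K_\alpha^{(\alpha)}=\{1\}$, rather than the stated observation that prior derivatives ``contain other points and are therefore infinite.''
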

\begin{proof}
We define the function 
\begin{equation}\label{eq:CBtilde}
    \funcion{\mathcal{\widetilde{CB}}}
		{\mathscr{K}}{\big(\Omega\times(\omega\smallsetminus\{0\})\big)\cup(0,0)}
		{[K]}{\mathcal{\widetilde{CB}}([K])=\mathcal{CB}(K)=(\alpha,p).}
\end{equation}
By Theorem~\ref{Th2} and Proposition~\ref{Prop1}, we see that 
$\mathcal{\widetilde{CB}}$ is well-defined. Moreover, Corollary~\ref{Cor1} 
implies that $\mathcal{\widetilde{CB}}$ is a surjective function. Furthermore, by 
Theorem~\ref{Th3}, $\mathcal{\widetilde{CB}}$ is an injective function.  
Then, 
\begin{equation*}
    |\mathscr{K}|=|\big(\Omega\times(\omega\smallsetminus\{0\})\big)\cup(0,0)|
		= |\Omega\times\omega|=|\Omega|=\aleph_1.\qedhere
\end{equation*}
\end{proof}
Last theorem shows that 
\begin{equation*}
    \aleph_0< \aleph_1 = |\mathscr{K}|\leq 2^{\aleph_0} = \mathfrak{c},
\end{equation*}
where $\mathfrak{c}$ is the cardinality of $\mathbb{R}$.
\subsection{A ``primitive'' related to the Cantor-Bendixson derivative 
of compact subsets of the real line}
We end this paper with a last theorem that we can view as 
a generalization of Theorem \ref{Th1} and Corollary \ref{Cor1} 
given in Section \ref{Sectfam}.  The next result shows that 
for any compact subset of the reals, there is a primitive-like 
set associated to its Cantor-Bendixson derivative.
\begin{Theorem}\label{Thprimitive}
Suppose that $\alpha\in\Omega$. Let $F$ be a compact subset of $\mathbb{R}$. 
Then, there exists a compact set $\mathcal{F} \subset \mathbb{R}$ 
such that $\mathcal{F}^{(\alpha)}=F$.
\end{Theorem}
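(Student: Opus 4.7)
The plan is to realize $\mathcal{F}$ as the union of $F$ with carefully constructed countable compact ``fillings'' placed inside each bounded gap of $F$, chosen via Theorem~\ref{Th1} so that after $\alpha$ derivatives the extra material collapses to the endpoints of the gap and disappears elsewhere, while every point of $F$ is kept alive. If $F=\varnothing$, I take $\mathcal{F}=\varnothing$. Otherwise I set $a:=\min F$ and $b:=\max F$ and decompose the open set $[a,b]\smallsetminus F$ into its (at most countable) family of disjoint open connected components $(a_n,b_n)$, noting that $a_n,b_n\in F$. For each such gap I pick a midpoint $c_n\in(a_n,b_n)$ and apply Theorem~\ref{Th1} to $(c_n,b_n]$ and, by an obvious mirror version, to $[a_n,c_n)$, obtaining a countable compact set $K_n\subset[a_n,b_n]$ with $K_n^{(\alpha)}=\{a_n,b_n\}$ (the last equality being ensured by Lemma~\ref{Lemtech0} applied to the two disjoint closed halves). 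I then define $\mathcal{F}:=F\cup\bigcup_{n} K_n$.

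First I would check that $\mathcal{F}$ is compact. Boundedness is immediate from $\mathcal{F}\subset[a,b]$. For closedness, $\sum_n(b_n-a_n)\leq b-a<\infty$ forces $b_n-a_n\to 0$, so any sequence in $\mathcal{F}$ spread across infinitely many distinct $K_n$ is squeezed between the converging sequences $(a_{n_k})$ and $(b_{n_k})$ inside the closed set $F$; sequences eventually trapped in a single $K_n$ or in $F$ are trivial.

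Next I would prove $F\subset\mathcal{F}^{(\alpha)}$ by transfinite induction on $\beta\leq\alpha$. At a successor stage $\beta+1$, take $p\in F$: if $p$ is non-isolated in $F$, a sequence in $F\smallsetminus\{p\}$ converges to $p$, and the inductive hypothesis places those points in $\mathcal{F}^{(\beta)}$, giving $p\in\mathcal{F}^{(\beta+1)}$; if instead $p$ is isolated in $F$, then $p$ must be an endpoint of some gap $(a_n,b_n)$, whence $p\in K_n^{(\alpha)}\subset K_n^{(\beta+1)}\subset\mathcal{F}^{(\beta+1)}$. The limit ordinal step is immediate from the intersection definition of the derivative.

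The reverse inclusion $\mathcal{F}^{(\alpha)}\subset F$ is the most delicate step, and it is where Lemma~\ref{Lemtech} is essential. If some $q\in\mathcal{F}^{(\alpha)}$ lay in a gap $(a_n,b_n)$, I would choose $\delta>0$ so small that $q\in(a_n+\delta,b_n-\delta)$ and $a_n+\delta,\,b_n-\delta\notin K_n$ (possible since $K_n$ is countable). Writing $I:=[a_n+\delta,b_n-\delta]$, the identities $\mathcal{F}\cap I=K_n\cap I=K_n\cap\accentset{\circ}{I}$ together with two applications of Lemma~\ref{Lemtech} yield
\begin{equation*}
  \mathcal{F}^{(\alpha)}\cap I=(\mathcal{F}\cap I)^{(\alpha)}=(K_n\cap I)^{(\alpha)}=K_n^{(\alpha)}\cap I=\{a_n,b_n\}\cap I=\varnothing,
\end{equation*}
contradicting $q\in\mathcal{F}^{(\alpha)}\cap I$. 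Hence $\mathcal{F}^{(\alpha)}=F$. The main obstacle is the precise $\alpha$-th derivative bookkeeping on both sides: keeping every isolated point of $F$ alive through $\alpha$ derivations (resolved by the identity $K_n^{(\alpha)}=\{a_n,b_n\}$) while simultaneously ruling out any stray survivor inside a gap, which is exactly what the localization argument via Lemma~\ref{Lemtech} is tailor-made to accomplish, replacing what would otherwise be a painful direct transfinite computation.
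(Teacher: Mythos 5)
Your construction is genuinely different from the paper's and, up to one edge case, it works. The paper does not fill the complementary gaps of $F$; instead it attaches to each isolated point $x_n$ of $F$ a one-sided set $K_n\subset(x_n-r_n,x_n]$ with $K_n^{(\alpha)}=\{x_n\}$, where $r_n<\tfrac{1}{n+1}$ and $(x_n-r_n,x_n+r_n)\cap F=\{x_n\}$, and then proves the exact formula $\mathcal{F}^{(\beta)}=\biguplus_n K_n^{(\beta)}\cup F$ for every $\beta\le\alpha$ by a full transfinite induction, from which both inclusions drop out at $\beta=\alpha$. Your route buys a real simplification on the reverse inclusion: localizing to a closed interval $I$ strictly inside a gap and applying Lemma~\ref{Lemtech} twice replaces that entire induction by the computation $\mathcal{F}^{(\alpha)}\cap I=(K_n\cap I)^{(\alpha)}=K_n^{(\alpha)}\cap I=\varnothing$ (and since $\mathcal{F}\subset[a,b]=F\cup\bigcup_n(a_n,b_n)$, this does give $\mathcal{F}^{(\alpha)}\subset F$). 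You still need a transfinite induction for $F\subset\mathcal{F}^{(\alpha)}$, but your split into ``$p\in F'$: use the inductive hypothesis'' versus ``$p$ isolated: use $p\in K_n^{(\alpha)}\subset K_n^{(\beta+1)}\subset\mathcal{F}^{(\beta+1)}$'' is correct, and the compactness argument via $\sum_n(b_n-a_n)\le b-a$ is sound.

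The price of filling gaps rather than decorating isolated points is the unproved claim that every isolated point of $F$ is an endpoint of a bounded gap, and this fails precisely when $F$ is a singleton: for $F=\{p\}$ and $\alpha\ge1$ your recipe produces no gaps at all, hence $\mathcal{F}=F$ and $\mathcal{F}^{(\alpha)}=\varnothing\neq F$. For $|F|\ge2$ the claim does hold (if $p\in F$ is isolated and $p>\min F$, then $s:=\sup\{x\in F:x<p\}$ belongs to $F$ and satisfies $s<p$, so $(s,p)$ is a gap with right endpoint $p$; symmetrically on the other side), so the only repair needed is to treat the singleton case separately, e.g.\ by applying Theorem~\ref{Th1} directly to obtain $K\subset(p-1,p]$ with $K^{(\alpha)}=\{p\}$. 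With that patch your proof is complete.
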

\begin{proof}
If $\alpha=0$, we define $\mathcal{F}=F$ and the result holds. \\
From now on, we suppose that $\alpha>0$.  There are two cases. First, if $F$ 
is perfect, i.e. $F=F'$, we can take $\mathcal{F} = F$, and the result 
follows.\\
We now assume that $F\neq F'$.  Since $F\smallsetminus F'$ is the 
set of all isolated points of $F$, we have that 
$F\smallsetminus F' \neq \varnothing$ is countable. 
Hence, $F\smallsetminus F'=\{x_n:n\in I\}$, where 
$\varnothing \neq I\subset \omega$, and $x_n\neq x_m$, for all 
$n,m\in I$ with $n\neq m$.  Furthermore, for all $n\in I$, there 
exists $ r_n\in (0,\frac{1}{n+1})$ such that 
$(x_n- r_n,x_n+ r_n)\cap F=\{x_n\}$. By Theorem~\ref{Th1}, we see that 
for all $n \in I$, there exits $K_n\in\mathcal{K}$ such that 
$K_n\subset (x_n- r_n,x_n]$ and $K_n^{(\alpha)}=\{x_n\}$. Since 
$\left( {(x_n-r_n, x_n]} \right)_{n \in I}$ is a pairwise disjoint 
sequence of intervals, we see that $(K_n)_{n \in I}$ is a 
pairwise disjoint sequence in $\mathcal{K}$.  We now define the set 
$\mathcal{F} \subset \mathbb{R}$ given by
\begin{equation}\label{eq:primitive1}
    \mathcal{F}:= \biguplus_{n\in I} K_n \cup F.
\end{equation}
\paragraph{\underline{Claim 1}} $\mathcal{F}$ is a compact subset of $\mathbb{R}$. \\
In fact, let $(z_k)_{k\in\omega}$ be a sequence in $\mathcal{F}$ 
such that $z_k\to z \in \mathbb{R}$ when $k\to+\infty$.  There are three 
cases.
\begin{enumerate}[label=(\roman*),leftmargin=*]
\item 
If $\{k\in\omega: z_k\in F\}$ is infinite, there exists a subsequence  
$(z_{\phi(k)})_{k\in\omega}$ in $F$, where $\phi:\omega \longmapsto \omega$ is a 
strictly increasing function.  Since $F$ is closed, we conclude that 
$z\in F\subset \mathcal{F}$.
\item 
We now suppose that there exists $m\in I$ such that 
$\{k\in\omega: z_k\in K_m\}$ is infinite. Similarly as in the previous 
case, we obtain that $z\in K_m \subset \mathcal{F}$.
\item 
Finally, we assume that for all $n\in I$,  $\{k\in\omega: z_k\in K_n\}$ 
is a finite set and $\{k\in\omega: z_k\in F\}$ is also finite. Thus, there exists 
a subsequence $(z_{\psi(k)})_{k\in\omega}$, where $\psi:\omega \longmapsto \omega$ 
is a strictly increasing function, and there is also a strictly increasing function 
$\sigma: \omega \longmapsto I$ such that for all $k\in\omega$
\begin{equation}\label{eq:primitive2}
	z_{\psi(k)}\in K_{\sigma(k)}\subset (x_{\sigma(k)}-r_{\sigma(k)},x_{\sigma(k)}].
\end{equation}
In order to prove the last assertion, we see that there exists $n_0 \in I$ such that 
$\{k \in \omega : z_k \in K_{n_0}\} \neq\varnothing$.  Then, there is 
$k_0 \in \omega$ with $z_{k_0} \in K_{n_0}$.  We thus define 
$\psi(0):=k_0$ and $\sigma(0):=n_0$.  We now get $n_1 \in I$ with $n_1>n_0$ and 
such that $\{k \in \omega: z_k \in K_{n_1}, k> k_0\} \neq \varnothing$.  So, there 
exists $k_1 \in \omega$ with $k_1>k_0$ and such that $z_{k_1} \in K_{n_1}$.  We define  
$\psi(1):=k_1$ and $\sigma(1):=n_1$. By continuing this process, functions 
$\psi$ and $\sigma$ are recursively obtained.  From~\eqref{eq:primitive2}, we have that  
for all $k \in \omega$, $|x_{\sigma(k)}-z_{\psi(k)}|<r_{\sigma(k)}<\frac{1}{\sigma(k)+1}$. 
As $(z_{\psi(k)})_{k\in\omega}$ converges to $z$, it follows that 
$(x_{\sigma(k)})_{k\in\omega}$ also converges to $z$. Since, the elements of the last 
sequence belong to $F$, and $F$ is closed, we conclude that $z\in F\subset\mathcal{F}$.
\end{enumerate}
From (i), (ii) and (iii), $\mathcal{F}$ is a closed subset of $\mathbb{R}$.  Moreover, 
since $F$ is bounded, there exist $a,b\in\mathbb{R}$, with $a<b$, 
such that $F\subset [a,b]$. Then, $\mathcal{F}\subset [a-1,b]$, i.e.,
$\mathcal{F}$ is bounded. Hence, $\mathcal{F}$ is a compact subset 
of $\mathbb{R}$. \\
\paragraph{\underline{Claim 2}} $\mathcal{F}^{(\alpha)} = F$.  \\
Actually, we will show  the following more general result: 
for all countable ordinal number $\beta\in\Omega$ such that $\beta\le \alpha$  
\begin{equation}\label{eq:primitive3}
    \mathcal{F}^{(\beta)}= \biguplus_{n\in I} K_n^{(\beta)}\cup F.
\end{equation}
In order to prove~\eqref{eq:primitive3}, we proceed by Transfinite Induction 
as in Theorem~\ref{Th1}.
\begin{enumerate}[label=(\alph*),leftmargin=*]
\item 
If $\beta=0$, then the result holds immediately.
\item 
We now suppose that~\eqref{eq:primitive3} is true for a given $\beta \in \Omega$ 
such that $\beta<\alpha$. We note that for all $n \in I$, 
$K_n^{(\beta+1)}\subset \mathcal{F}^{(\beta+1)}$.  Then,
\begin{equation*}
	\biguplus_{n\in I} K_n^{(\beta+1)}\subset \mathcal{F}^{(\beta+1)}.
\end{equation*}
Furthermore, by the induction hypothesis, $F \subset \mathcal{F}^{(\beta)}$.  Then,   
$F' \subset \mathcal{F}^{(\beta+1)}$. Moreover, 
\begin{equation*}
	F\smallsetminus F' = \biguplus_{n \in I} \{x_n\}   
	  = \biguplus_{n\in I} K_n^{(\alpha)}
		\subset \biguplus_{n\in I} K_n^{(\beta+1)}
		\subset \mathcal{F}^{(\beta+1)}.
\end{equation*}
Hence,
\begin{equation}\label{eq:primitive4}
	\biguplus_{n\in I}K_n^{(\beta+1)} \cup F \subset \mathcal{F}^{(\beta+1)}.
\end{equation}
In order to show the reverse inclusion, we take $x\in \mathcal{F}^{(\beta+1)}$. 
Using the induction hypothesis, we see that
\begin{equation*}
	x\in \mathcal{F}^{(\beta+1)} = (\mathcal{F}^{(\beta)})'
		=\left(\biguplus_{n\in I}K_n^{(\beta)} \cup F \right)'
		= \left(\biguplus_{n\in I}K_n^{(\beta)} \right)' \cup F'.
\end{equation*}
Using now Claim 1, we have that $\mathcal{F}$ is closed.  Then, 
\begin{equation*}
	x\in \mathcal{F}^{(\beta+1)} \subset \mathcal{F}^{(\beta)}
		=\biguplus_{n\in I}K_n^{(\beta)} \cup F.
\end{equation*}
If $x\in F$, there is nothing left to show. On the other hand, if $x\not\in F$, 
there exists $m\in I$ such that $x\in K_m^{(\beta)}\subset (x_m-r_m,x_m]$. We now 
assume, by contradiction, that $x\notin K_m^{(\beta+1)}$. Then, $x$ is an isolated 
point of $K_m^{(\beta)}$. Since $x\neq x_m \in F$, there is 
$0<\varepsilon<\min\{x-x_m+r_m, x_m-x\}$ such that 
\begin{equation*}
	 (x-\varepsilon,x+\varepsilon)\cap K_m^{(\beta)} = \{x\}.
\end{equation*}
Moreover, as $(x-\varepsilon,x+\varepsilon)\subset (x_m-r_m,x_m)$, we conclude that  
for all $n \in I$ with $n\neq m$, 
\begin{equation*}
	(x-\varepsilon,x+\varepsilon) \cap  K_n^{(\beta)} = \varnothing.
\end{equation*}
Then, 
\begin{equation*}
	(x-\varepsilon,x+\varepsilon)\cap \biguplus_{n\in I} K_n^{(\beta)} = \{x\}.
\end{equation*}
Therefore, $x$ is an isolated point of $\biguplus_{n\in I} K_n^{(\beta)}$.  Since 
$x\not\in F$, and $F$ is closed, we see that $x\not\in F'$.  Hence, 
$x \in \left(\biguplus_{n\in I} K_n^{(\beta)}\right)'$, which is contradictory. 
In consequence,  
\begin{equation*}
	x\in K_m^{(\beta+1)} \subset \biguplus_{n\in I}K_n^{(\beta+1)}.
\end{equation*}
Thus, summarizing, we can conclude that 
\begin{equation}\label{eq:primitive5}
	\mathcal{F}^{(\beta+1)} \subset \biguplus_{n\in I}K_n^{(\beta+1)} \cup F.
\end{equation}
From~\eqref{eq:primitive4} and~\eqref{eq:primitive5}, we get
\begin{equation*}
	\mathcal{F}^{(\beta+1)} = \biguplus_{n\in I}K_n^{(\beta+1)} \cup F.
\end{equation*}
\item 
Finally, let $\gamma \neq 0$ be a limit ordinal such that $\gamma \le \alpha$ and 
we assume that for all ordinal number $\delta$ such that $\delta<\gamma$, 
\begin{equation} \label{eq:primitive6}
    \mathcal{F}^{(\delta)}= \biguplus_{n\in I}K_n^{(\delta)} \cup F. 
\end{equation}
Using~\eqref{eq:primitive6}, we obtain  
\begin{align} 
   \biguplus_{n\in I}K_n^{(\gamma)}\cup F
   &=  \biguplus_{n\in I}\left(\bigcap_{\delta<\gamma}
		   K_n^{(\delta)}\right)\cup F\notag\\
   &\subset  \bigcap_{\delta<\gamma}\left(\biguplus_{n\in I}
		   K_n^{(\delta)}\right)\cup F\notag\\
   &=  \bigcap_{\delta<\gamma}\left(\biguplus_{n\in I}
		   K_n^{(\delta)}\cup F\right)\notag\\
   &=\bigcap_{\delta<\gamma}\mathcal{F}^{(\delta)}\notag\\
   &= \mathcal{F}^{(\gamma)}.\label{eq:primitive7}
\end{align}
In order to show the other inclusion, we take $x\in \mathcal{F}^{(\gamma)}$. Using 
the induction hypothesis~\eqref{eq:primitive6}, we see that
\begin{equation*}
   \mathcal{F}^{(\gamma)} = \bigcap_{\delta<\gamma}  \mathcal{F}^{(\delta)} = 
	\bigcap_{\delta<\gamma}\left(\biguplus_{n\in I}K_n^{(\delta)}
	\cup F\right).
\end{equation*}
Then, either $x\in F$ or for all ordinal number $\delta$ such that 
$\delta<\gamma$, there exists $n\in I$ such that 
$x\in K_n^{(\delta)}$. If $x\in F$, then there is nothing else to be done.  If 
$x\notin F$, there  is $N\in I$ such that $x\in K_N^{(0)}=K_N$.  We now assume, 
to get a contradiction, that there is an ordinal number $\delta_0$ with 
$\delta_0<\gamma$ and such that $x \notin K_N^{(\delta_0)}$.  Since there is 
$l\in I$ with $l \neq N$ such that $x\in K_l^{(\delta_0)} \subset K_l$, we 
obtain a contradiction with the fact that $K_l \cap K_N = \varnothing$.  Hence,   
for all ordinal number $\delta$ such that $\delta<\gamma$, $x \in K_N^{(\delta)}$. 
In consequence,   
\begin{equation*}
    x\in\bigcap_{\delta<\gamma}K_N^{(\delta)}=K_N^{(\gamma)} 
		\subset \biguplus_{n\in I}K_n^{(\gamma)}.
\end{equation*}
Thus,
\begin{equation} \label{eq:primitive8}
   \mathcal{F}^{(\gamma)}\subset\biguplus_{n\in I}K_n^{(\gamma)} \cup F.
\end{equation}
From~\eqref{eq:primitive7} and~\eqref{eq:primitive8}, we have that   
\begin{equation*}
   \mathcal{F}^{(\gamma)}=\biguplus_{n\in I}K_n^{(\gamma)}\cup F.
\end{equation*}
\end{enumerate}
By (a), (b) and (c), we obtain~\eqref{eq:primitive3} for all countable 
ordinal number $\beta$ such that $\beta\le\alpha$.  Finally, 
using~\eqref{eq:primitive3} with $\alpha$, we get  
\begin{equation*}
    \mathcal{F}^{(\alpha)}=\biguplus_{n\in I} 
		K_n^{(\alpha)}\cup F= \biguplus_{n\in I} 
		\{x_n\}\cup F=F,
\end{equation*}
which finishes the proof. \qedhere
\end{proof}
\bibliographystyle{siam}
\bibliography{Cantor-Bendixson-arXiv}

\end{document}